\newtheorem{theorem}{Theorem}
\newtheorem{lemma}{Lemma}
\newtheorem{definition}{Definition}
\newtheorem{corollary}{Corollary}
 \newcommand{\Dat}{\mathcal{D}^{\alpha}_{t}}
  \newcommand{\CF}{\mathcal{CF}}
 \newcommand{\CFC}{\mathcal{CFC}}
 \newcommand{\cB}{\mathcal{B}}
 \newcommand{\cH}{\mathcal{H}}
 \newcommand{\cJ}{\mathcal{J}}
 \newcommand{\cL}{\mathcal{L}}
 \newcommand{\cN}{\mathcal{N}}
 \newcommand{\cW}{\mathcal{W}}
\begin{document}
	
\begin{tikzpicture}[remember picture,overlay]
	\node[anchor=north east,inner sep=20pt] at (current page.north east)
	{\includegraphics[scale=0.2]{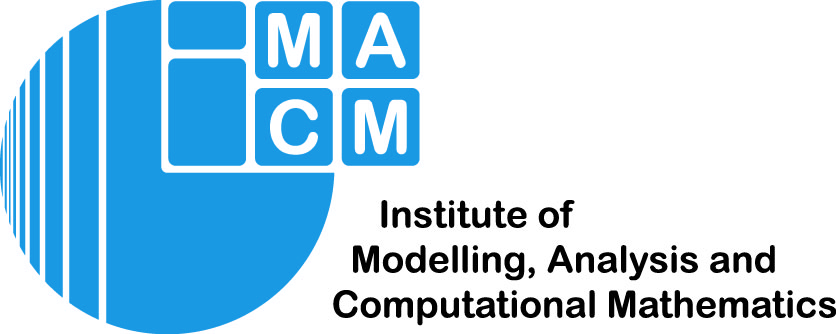}};
\end{tikzpicture}

\begin{frontmatter}

\title{Optimal Control of a Diffusive Epidemiological Model \\ Involving the Caputo-Fabrizio Fractional Time-Derivative}

\author[AMNEA]{Achraf Zinihi}
\ead{a.zinihi@edu.umi.ac.ma}

\author[AMNEA]{Moulay Rchid Sidi Ammi}
\ead{rachidsidiammi@yahoo.fr}

\author[BUW]{Matthias Ehrhardt\corref{Corr}}
\cortext[Corr]{Corresponding author}
\ead{ehrhardt@uni-wuppertal.de}

\address[AMNEA]{Department of Mathematics, MAIS Laboratory, AMNEA Group, Faculty of Sciences and Technics,\\
Moulay Ismail University of Meknes, Errachidia 52000, Morocco}

\address[BUW]{University of Wuppertal, Chair of Applied and Computational Mathematics,\\
Gaußstrasse 20, 42119 Wuppertal, Germany}

\begin{abstract}
In this work we study a fractional SEIR biological model of a reaction-diffusion, using the non-singular kernel Caputo-Fabrizio fractional derivative in the Caputo sense and employing the Laplacian operator.
In our PDE model, the government seeks immunity through the vaccination program, which is considered a control variable.
Our study aims to identify the ideal control pair that reduces the number of infected/infectious people and the associated vaccine and treatment costs over a limited time and space. 
Moreover, by using the forward-backward algorithm, the approximate results are explained by dynamic graphs to monitor the effectiveness of vaccination.
\end{abstract}

\begin{keyword}
epidemiological model \sep fractional derivatives \sep fractional differential equations \sep numerical simulations \sep optimal control.\\
\textit{2020 Mathematics Subject Classification:} 92C60, 26A33, 34K08, 33F05, 49J20.
\end{keyword}

\journal{}


\end{frontmatter}

\section{Introduction}\label{S1}
In the literature on calculus, we find that fractional derivatives have a long history of more than 300 years \cite{Machado2011, Oldham1974}. 
It is an old topic that arose as a result of a pertinent question that G.F.A.~de l'H\^opital asked G.W.~Leibniz in a letter about the possible meaning of a derivative of order $\frac{1}{2}$ \cite{Leibniz1849letter}. 
The Scott Blair model of sticky flexible material is the simple application of the fractional derivative \cite{Stiassnie1979}, where stress is equal to the order derivative of strain. 
There are other applications of the fractional derivative, for example, in quantum physics \cite{SahaRay2017}, the anomalous diffusion of particles \cite{Lutz2001}, the chaotic dynamics of the fractional Lorenz system \cite{Grigorenko2003}, and non-diffusive transport in plasma turbulence \cite{delCastilloNegrete2005}. 
The fractional derivative has also been used in mathematics \cite{Abbasbandy2022, SidiAmmi2022}, chemistry \cite{ToledoHernandez2014}, biology \cite{Mokhtari2019, Zinihi2024}, and so on. 
Nevertheless, some fundamental problems have prevented the popularization of fractional calculus. 

For example, in the fractional differential equations with the Riemann-Liouville fractional derivative, the physical meanings of the initial values are unknown \cite{Diethelm2012, Jumarie2007, Gorenflo1999}.
While the Caputo fractional derivative was introduced to avoid this difficulty \cite{Diethelm2012, Gorenflo1999}. 
The initial values in the case of Caputo are similar to those of integer differential equations, so the physical meanings are known. 
However, the kernel of this derivative has a singularity. 
Fractional derivatives with non-singular kernels have attracted more attention and interest from the scientific community. 
The Caputo-Fabrizio fractional time derivative in the Caputo sense ($\CFC$) is sometimes preferred for modeling physical or biological dynamical systems, giving a good description of the phenomena of diffusion and heterogeneity at different scales \cite{Arshad2020, Hikal2021, Maamar2024}.

As is well known, mathematical models have played an essential and very important role in the study of the dynamic evolution of infectious diseases, which has attracted more and more attention from biomathematicians, physicists, medical scientists, etc.\ \cite{Das2017, Guiro2023, SidiAmmi2023}. 
Recently, reaction-diffusion systems have often been used to describe the movement of people and the heterogeneity of space, which play an important role in the spread of some infectious diseases, such as coronavirus, dengue or Ebola \cite{Forna2020, Hamdan2022, Kevrekidis2021}. 

To describe the spread of the disease in a spatial environment, many researchers have extensively investigated different types of spatiotemporal epidemiological models.
In particular, authors in \cite{Chinviriyasit2010}, analytically and numerically investigate the behavior of positive solutions of a reaction-diffusion SIR model for transmission diseases such as pertussis. 
\cite{Kuniya2018} proposed a nonlocal diffusion epidemiological SIR model and obtained threshold theories on the close global stability of disease-free and endemic equilibria by constructing appropriate Lyapunov functions. 
In the research presented by \cite{SidiAmmi2023}, the authors studied a reaction-diffusion SIR epidemic model, expertly formulated as a parabolic system of PDEs, the focus of their investigation is the development of an optimal control strategy, thoughtfully designed to mitigate the spread of infection and reduce the costs associated with vaccination. 
In addition, studies of reaction-diffusion epidemiological systems attract much attention, since the pattern of transmission can be an important indicator of how diseases spread.

In recent years, there has been a growing interest in incorporating fractional calculus and nonlinear operators into mathematical models to provide more accurate and realistic descriptions of complex phenomena. 
When considering the SEIR epidemic model, the use of the $\CFC$ fractional time derivative and the Laplacian operator can provide several motivations. 
For a recent review on epidemiological models, including fractional order and PDE models we refer the reader to \cite{hoang2024differential}.

First, the $\CFC$ fractional time-derivative provides a suitable framework for capturing the memory and inheritance properties often observed in epidemic dynamics. 
Traditional integer-order derivatives assume instantaneous interactions and do not account for the delayed effects of past interactions. 
By incorporating the $\CFC$ fractional time-derivative, the SEIR model can better capture the influence of past states on current dynamics, allowing for a more accurate representation of epidemic spread and a deeper understanding of epidemic behavior, and potentially improving the accuracy of predictions, aiding in the development of effective control strategies and public health interventions.

Second, the Laplacian operator plays a key role in capturing spatial diffusion dynamics, representing the movement of individuals across a defined spatial domain. By accounting for concentration gradients, the model can capture variations in the density of infected individuals in different regions. 
This nuanced approach facilitates both local and global stability analyses, providing insight into equilibrium characteristics and the likelihood of epidemic persistence or extinction in specific spatial domains. 
The inclusion of this operator contributes to informed decision-making in public health planning and response, allowing for the identification of high-risk areas and the optimization of resource allocation.

In addition, the combined use of the $\CFC$ fractional time derivative and the Laplacian operator in the SEIR epidemic model can provide insight into the long-term behavior and stability of the system. 
Fractional calculus provides a broader perspective on the dynamics by allowing the analysis of differential equations of fractional order. 
This can help to reveal new phenomena and uncover additional aspects of epidemic spread that are not captured by traditional integer-order models. 
The inclusion of the Laplacian operator further enhances the model's ability to capture complex interactions, potentially leading to more accurate predictions and control strategies for epidemic outbreaks.
These modifications allow for the consideration of memory effects and provide a deeper understanding of the long-term behavior of epidemics. 
By using these advanced mathematical tools, researchers can improve the accuracy and realism of their models, ultimately contributing to better strategies for mitigating and controlling infectious diseases.

Nevertheless, challenges arise, including the determination of a solution to our system. 
We encounter significant hurdles in establishing the adjoint system associated with the proposed problem and subsequently deriving the essential optimality conditions.
In addition, complications arise in discretizing the problem to obtain numerical results.
Due to the diverse characteristics of these operators, many researchers and scientists have published relevant works in various fields \cite{Arshad2020, Hikal2021, SidiAmmi2022}.

In this article, we will study a new extension of the SEIR epidemic model, where we integrate the spatial behavior of the population and the control term representing the vaccination program, because it is sometimes considered as an effective means to prevent and control the spread of infection. 
Our goal is to minimize the number of infected people for the proposed fractional system coupled with no-flux boundary conditions by incorporating the $\CFC$ fractional time derivative and the Laplacian operator for the spread of the disease.

This paper is organized as follows. 
In Section~\ref{S2}, several key and crucial definitions related to the $\CF$ fractional calculus and its properties are presented. 
In Section~\ref{S3}, the SEIR fractional optimal control model is presented. 
While in Sections~\ref{S4} and \ref{S5}, we prove the existence, uniqueness, and positivity of the solution and the existence of an optimal solution to the proposed model. 
Furthermore, section~\ref{S6} is devoted to the determination of the necessary optimal conditions. 
Before concluding the present study, interesting numerical approximations that illustrate the relationship between the spread of the disease and changes in the order of the derivative are explained in Section~\ref{S7}. 
Finally, the conclusions of the study are discussed in detail in Section~\ref{S8}.

\section{Preliminary results}\label{S2}
In this section, we will recall some basic properties of the $\CFC$ fractional time derivative and the $\CF$ fractional integral. 
To do this, 
let $\alpha\in (0,1)$, $T>0$, $f\in H^1(0,T)$, and $t\in (0,T)$. 
We let $\gamma = \frac{\alpha}{1-\alpha}$ and $M(\alpha)$ be a normalization function such as $M(0)=M(1)=1$.

\begin{definition}[{\cite[Page 2]{Abdeljawad2017}}]\label{D1}\ \\[-.5cm]
\begin{itemize}

\item[a.] The $\CFC$ fractional derivative of $f$ with base point $0$ of order $\alpha$ is defined at point $t$ by
\begin{equation}\label{E2.1}
   { }^{\CFC} \Dat f(t)
   =\frac{M(\alpha)}{1-\alpha} \int_0^t f^{\prime}(y) \,e^{-\gamma (t-y)} \,dy.
\end{equation}

\item[b.] The backward $\CFC$ fractional derivative with base point $T$, is defined by
\begin{equation}\label{E2.2}
{ }^{\CFC}_{T} \Dat f(t) = -\frac{M(\alpha)}{1-\alpha} \int_t^T f^{\prime}(y) \,e^{-\gamma (y-t)} \,dy.
\end{equation}
\end{itemize}
\end{definition}

Note that if we let $\alpha\to1$ in \eqref{E2.1}, then we get the usual derivative $\partial_t$.

\begin{definition}[{\cite[Page 1]{Abdeljawad2017}}]\label{D2}
The $\CF$ fractional integral operator with base point $0$, is written as
\begin{equation}\label{E2.3}
{ }^{\CF} I^{\alpha} f(t) 
= \frac{1-\alpha}{M(\alpha)} f(t) + \frac{\alpha}{M(\alpha)}
\int_0^t f(s)\,ds.
\end{equation}
\end{definition}
The following Lemma~\ref{L1} and Lemma~\ref{L2} (or Corollary~\ref{C1}) will help us to prove the existence of a positive unique solution to our fractional model.

\begin{lemma}[{\cite[Page 2]{Abdeljawad2017}}]\label{L1}
With the previous assumptions, we have 
\begin{equation}\label{E2.4}
  { }^{\CF} I^{\alpha} \bigl( { }^{\CFC} 
   \Dat f(t)\bigr) = f(t) - f(0).
\end{equation}
\end{lemma}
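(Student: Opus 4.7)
The plan is to substitute the definitions of $^{\CFC}\Dat$ and $^{\CF}I^\alpha$ directly into the left-hand side and then reduce the resulting double integral to the telescoping form $\int_0^t f'(y)\,dy$. Using Definitions~\ref{D1} and~\ref{D2}, the left-hand side expands to
\begin{equation*}
\frac{1-\alpha}{M(\alpha)}\cdot\frac{M(\alpha)}{1-\alpha}\int_0^t f'(y)\,e^{-\gamma(t-y)}\,dy
+\frac{\alpha}{M(\alpha)}\cdot\frac{M(\alpha)}{1-\alpha}\int_0^t\!\!\int_0^s f'(y)\,e^{-\gamma(s-y)}\,dy\,ds,
\end{equation*}
and the normalization constants cancel, leaving a first term equal to $\int_0^t f'(y)\,e^{-\gamma(t-y)}\,dy$ and a second term with a factor $\gamma=\alpha/(1-\alpha)$ in front of the double integral.

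Next, I would apply Fubini's theorem to the double integral (which is legitimate since $f\in H^1(0,T)$ and the exponential kernel is bounded on $[0,T]$), swapping the order of integration so that $y$ runs from $0$ to $t$ and $s$ runs from $y$ to $t$. The inner integral $\int_y^t e^{-\gamma(s-y)}\,ds$ evaluates explicitly to $(1-e^{-\gamma(t-y)})/\gamma$. Multiplying by $\gamma$ yields the clean expression $\int_0^t f'(y)\,\bigl(1-e^{-\gamma(t-y)}\bigr)\,dy$ for the second term.

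Adding the two contributions, the $e^{-\gamma(t-y)}$ pieces cancel exactly, leaving
\begin{equation*}
\int_0^t f'(y)\,e^{-\gamma(t-y)}\,dy+\int_0^t f'(y)\,\bigl(1-e^{-\gamma(t-y)}\bigr)\,dy
=\int_0^t f'(y)\,dy = f(t)-f(0),
\end{equation*}
by the fundamental theorem of calculus applied to $f\in H^1(0,T)$. This establishes \eqref{E2.4}.

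The argument involves no essential obstacle: the only delicate point is justifying the Fubini swap, which is immediate from the absolute integrability of $f'(y)\,e^{-\gamma(s-y)}$ on the triangle $\{0\le y\le s\le t\}$ thanks to the $H^1$ regularity of $f$ and the boundedness of the exponential kernel. The rest is bookkeeping, with the key algebraic observation being that the prefactors $(1-\alpha)/M(\alpha)$ and $\alpha/M(\alpha)$ in the definition of $^{\CF}I^\alpha$ are precisely the weights needed to make the exponential terms cancel after swapping integration order.
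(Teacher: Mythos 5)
Your proof is correct. The paper itself gives no proof of Lemma~\ref{L1} --- it is quoted directly from the cited reference \cite{Abdeljawad2017} --- so there is no in-paper argument to compare against; your direct computation (substitute both definitions, cancel the normalization factors, apply Fubini on the triangle $\{0\le y\le s\le t\}$, evaluate $\int_y^t e^{-\gamma(s-y)}\,ds=(1-e^{-\gamma(t-y)})/\gamma$, and observe the exact cancellation of the exponential terms) is the standard verification and every step checks out. The two regularity points are also handled properly: $f\in H^1(0,T)$ gives $f'\in L^1(0,T)$, which justifies both the Fubini swap (the kernel satisfies $|e^{-\gamma(s-y)}|\le 1$ for $s\ge y$) and the final application of the fundamental theorem of calculus via absolute continuity of $f$.
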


\begin{lemma}\label{L2}
Let $\varphi$ a continuous function on $[0, T]$. Then
\begin{equation}\label{E2.5}
    { }^{\CFC} \Dat \varphi(t) \cdot \varphi(t) 
    \ge \frac{1}{2} { }^{\CFC} \Dat \bigl(\varphi^2(t)\bigr).
\end{equation}
\end{lemma}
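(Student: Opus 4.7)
The plan is to reduce the claimed inequality to a single integral and show that integral is non-negative via a clever substitution followed by integration by parts. Assume (as the definition of $\CFC$ requires) that $\varphi$ has the regularity needed for $\varphi^{\prime}$ to appear under the integral, i.e., $\varphi\in H^1(0,T)$.

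First I would compute the difference
\begin{equation*}
D(t) := { }^{\CFC} \Dat \varphi(t) \cdot \varphi(t) - \frac{1}{2}\, { }^{\CFC} \Dat \bigl(\varphi^2(t)\bigr)
= \frac{M(\alpha)}{1-\alpha}\int_0^t \varphi^{\prime}(y)\bigl[\varphi(t) - \varphi(y)\bigr] e^{-\gamma(t-y)}\,dy,
\end{equation*}
using $(\varphi^2)^{\prime}(y) = 2\varphi(y)\varphi^{\prime}(y)$ together with the fact that $\varphi(t)$ is a constant with respect to the integration variable $y$.

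Next, I would introduce the auxiliary function $\psi(y) := \varphi(t) - \varphi(y)$, so that $\psi^{\prime}(y) = -\varphi^{\prime}(y)$ and $\psi(t) = 0$. With this substitution the integrand becomes $\varphi^{\prime}(y)[\varphi(t)-\varphi(y)] = -\psi^{\prime}(y)\psi(y) = -\tfrac{1}{2}(\psi^2)^{\prime}(y)$, giving
\begin{equation*}
D(t) = -\frac{M(\alpha)}{2(1-\alpha)} \int_0^t \bigl(\psi^2(y)\bigr)^{\prime}\, e^{-\gamma(t-y)}\,dy.
\end{equation*}
I would then integrate by parts (the boundary term at $y=t$ vanishes because $\psi(t)=0$) to obtain
\begin{equation*}
D(t) = \frac{M(\alpha)}{2(1-\alpha)}\Bigl[\,\bigl(\varphi(t)-\varphi(0)\bigr)^2 e^{-\gamma t} + \gamma \int_0^t \bigl(\varphi(t)-\varphi(y)\bigr)^2 e^{-\gamma(t-y)}\,dy\,\Bigr].
\end{equation*}
Since $M(\alpha)>0$, $1-\alpha>0$, $\gamma>0$, and the two bracketed terms are manifestly non-negative, $D(t)\ge 0$, which is exactly the claimed inequality.

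The only delicate point is the regularity assumption: the statement only says $\varphi$ is continuous, but the integration-by-parts step needs $\varphi\in H^1$ (or at least absolutely continuous). I would flag this and either sharpen the hypothesis or, for $\varphi$ merely continuous, approximate by smooth functions and pass to the limit using continuity of the kernel $e^{-\gamma(t-y)}$ on $[0,t]$. Everything else is a short computation; the essential trick is recognising $\varphi^{\prime}(y)[\varphi(t)-\varphi(y)] = -\tfrac{1}{2}(\psi^2)^{\prime}(y)$, after which the exponential kernel converts the remaining boundary and volume terms into visibly non-negative quantities.
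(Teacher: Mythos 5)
Your proof is correct, and it takes a genuinely different route from the paper after the common first step. Both arguments begin by writing the difference as $\frac{M(\alpha)}{1-\alpha}\int_0^t \varphi'(y)\bigl(\varphi(t)-\varphi(y)\bigr)e^{-\gamma(t-y)}\,dy$. The paper then expands $\varphi(t)-\varphi(y)=\int_y^t\varphi'(s)\,ds$, swaps the order of integration by Fubini, recognises the resulting integrand as $\tfrac12 e^{\gamma(t-s)}\partial_s\bigl(\int_0^s\varphi'(y)e^{-\gamma(t-y)}dy\bigr)^2$, and integrates by parts in $s$ to exhibit a non-negative boundary term plus $\gamma$ times an integral of a square. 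You instead observe that $\varphi'(y)\bigl(\varphi(t)-\varphi(y)\bigr)=-\tfrac12\bigl(\psi^2\bigr)'(y)$ with $\psi(y)=\varphi(t)-\varphi(y)$ and integrate by parts once in $y$; the vanishing of $\psi$ at $y=t$ kills one boundary term and the sign of $\frac{d}{dy}e^{-\gamma(t-y)}$ does the rest. Your version is shorter, avoids the Fubini swap and the nested-integral manipulation, and yields the more transparent representation $D(t)=\frac{M(\alpha)}{2(1-\alpha)}\bigl[(\varphi(t)-\varphi(0))^2e^{-\gamma t}+\gamma\int_0^t(\varphi(t)-\varphi(y))^2e^{-\gamma(t-y)}dy\bigr]$, where non-negativity is visible term by term; the paper's version expresses the same quantity through squares of the weighted integral $\int_0^s\varphi'(y)e^{-\gamma(t-y)}dy$. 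Your remark on regularity is also well taken: continuity alone does not license the appearance of $\varphi'$ under the integral, and the paper tacitly relies on the standing assumption $f\in H^1(0,T)$ made at the start of the section (the same caveat applies to the paper's own computation, which likewise differentiates $\varphi$ and integrates by parts). Either sharpening the hypothesis to $\varphi\in H^1(0,T)$ or running the density argument you sketch closes that gap.
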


\begin{proof}
Let us rewrite the inequality~\eqref{E2.5} in the form
\begin{equation*}
\begin{split}
{ }^{\CFC} \Dat \varphi(t) \cdot \varphi(t) - \frac{1}{2} { }^{\CFC} 
\Dat \bigl(\varphi^2(t)\bigr) 
&=  \frac{M(\alpha)}{1-\alpha} \Big(\varphi(t) 
   \int_0^t \varphi^{\prime}(y)\, e^{-\gamma (t-y)} \,dy\\
&\qquad -\frac{1}{2} \int_0^t 2\varphi(y)\varphi^{\prime}(y) \,e^{-\gamma (t-y)} \,dy\Big)\\ 
&= \frac{M(\alpha)}{1-\alpha} \int_0^t \bigl(\varphi(t) - \varphi(y)\bigr)\varphi^{\prime}(y) \,e^{-\gamma (t-y)} \,dy\\ 
&= \frac{M(\alpha)}{1-\alpha} \int_0^t \Bigl(\int_y^t \varphi^{\prime}(s)\,ds\Bigr)\varphi^{\prime}(y) \,e^{-\gamma (t-y)} \,dy\\  
&= \frac{M(\alpha)}{1-\alpha} \int_0^t \varphi^{\prime}(s) \Bigl( \int_0^s \varphi^{\prime}(y) \,e^{-\gamma (t-y)} \,dy\Bigr) \,ds.
\end{split}
\end{equation*}
Since
\begin{equation*}
\begin{split}
I &:= \frac{M(\alpha)}{1-\alpha} \int_0^t \varphi^{\prime}(s) 
  \Bigl( \int_0^s \varphi^{\prime}(y) \,e^{-\gamma (t-y)} \,dy\Bigr) \,ds\\ 
&= \frac{M(\alpha)}{2(1-\alpha)} \int_0^t e^{\gamma (t-s)} \frac{\partial}{\partial s}\biggl( \Bigl( \int_0^s \varphi^{\prime}(y) \,e^{-\gamma (t-y)} \,d y\Bigr)^2 \biggr)\,ds\\ 
&= \frac{M(\alpha)}{2(1-\alpha)} 
\biggl[ e^{\gamma (t-s)} \Bigl( 
\int_0^s \varphi^{\prime}(y) \,e^{-\gamma (t-y)} \,dy\Bigr)^2 \biggr]_{s=0}^{s=t}\\
&\qquad- \frac{M(\alpha)}{2(1-\alpha)}\int_0^t 
\Bigl(-\gamma e^{\gamma (t-s)}\Bigr) \Bigl( \int_0^s \varphi^{\prime}(y) \,e^{-\gamma (t-y)} \,dy\Bigr)^2 \,ds\\
&= \frac{M(\alpha)}{2(1-\alpha)} 
\Bigl( \int_0^t \varphi^{\prime}(y) \,e^{-\gamma (t-y)} \,dy\Bigr)^2 + \frac{M(\alpha)}{2(1-\alpha)} \gamma 
 \int_0^t e^{\gamma (t-s)} \Bigl( \int_0^s \varphi^{\prime}(y) \,e^{\gamma (t-y)} d y\Bigr)^2 \,ds \ge 0.
\end{split}
\end{equation*}
At this point, the proof is complete.
\end{proof}
Under the assumptions of Lemma~\ref{L2}, we have the following result
\begin{corollary}\label{C1}
Let $\varphi\colon[0, T] \to L^2(\Omega)$. 
Assume that 
there exists the $\CFC$ fractional derivative of $\varphi$.
Then,
\begin{equation}\label{E2.6}
\bigl({ }^{\CFC} \Dat \varphi(t), \varphi(t)\bigr)_{L^2(\Omega)} 
\ge \frac{1}{2} { }^{\CFC} \Dat \|\varphi(t)\|_{L^2(\Omega)}^2.
\end{equation}
\end{corollary}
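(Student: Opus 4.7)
The plan is to reduce the $L^2(\Omega)$-valued inequality to the scalar inequality already established in Lemma~\ref{L2}, applied pointwise in the spatial variable $x\in\Omega$, and then to integrate over $\Omega$. The key observation is that the $\CFC$ fractional time derivative is a linear integral operator acting only in the variable $t$, so it commutes with integration in $x$ via Fubini's theorem, provided $\varphi$ is sufficiently regular in $t$ for each fixed $x$.

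First, I would fix $x\in\Omega$ and regard $t\mapsto \varphi(t,x)$ as a scalar function on $[0,T]$ satisfying the hypotheses of Lemma~\ref{L2}. Applying that lemma pointwise gives
\begin{equation*}
{ }^{\CFC}\Dat\varphi(t,x)\cdot \varphi(t,x) \;\ge\; \frac{1}{2}\,{ }^{\CFC}\Dat\bigl(\varphi^2(t,x)\bigr)
\qquad \text{for a.e.\ } x\in\Omega.
\end{equation*}
Then I would integrate this inequality over $\Omega$. On the left-hand side, the integral is by definition the $L^2(\Omega)$ inner product $\bigl({ }^{\CFC}\Dat\varphi(t),\varphi(t)\bigr)_{L^2(\Omega)}$.

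Next, on the right-hand side I would use Fubini's theorem to interchange the spatial integral with the Riemann-type time integral that defines ${ }^{\CFC}\Dat$. Concretely,
\begin{equation*}
\int_\Omega{ }^{\CFC}\Dat\bigl(\varphi^2(t,x)\bigr)\,dx
=\frac{M(\alpha)}{1-\alpha}\int_\Omega\!\int_0^t \partial_y\bigl(\varphi^2(y,x)\bigr)\,e^{-\gamma(t-y)}\,dy\,dx
={ }^{\CFC}\Dat\Bigl(\int_\Omega\varphi^2(t,x)\,dx\Bigr)
={ }^{\CFC}\Dat\|\varphi(t)\|_{L^2(\Omega)}^2,
\end{equation*}
yielding the claimed inequality.

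The main subtlety, and the step I would flag as the principal obstacle, is the justification of Fubini's theorem and of the pointwise applicability of Lemma~\ref{L2}. The statement merely assumes the existence of ${ }^{\CFC}\Dat\varphi$; to apply Lemma~\ref{L2} for a.e.\ $x$ one needs $\partial_t\varphi(\cdot,x)$ to exist and be integrable for a.e.\ $x$, and to apply Fubini one needs $(t,x)\mapsto \varphi(t,x)\partial_t\varphi(t,x)\,e^{-\gamma(t-y)}$ to be integrable on $[0,t]\times\Omega$. Under the natural regularity hypothesis $\varphi\in H^1(0,T;L^2(\Omega))$, both requirements are met: $\partial_t\varphi\in L^2((0,T)\times\Omega)$ by Tonelli's theorem, the exponential kernel is bounded, and $t\mapsto\varphi(t,x)$ lies in $H^1(0,T)$ for a.e.\ $x$, so the scalar lemma applies. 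Once these measurability and integrability conditions are verified, the argument above closes the proof.
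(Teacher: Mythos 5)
Your proposal is correct and follows exactly the route the paper intends: the paper states Corollary~\ref{C1} without proof as an immediate consequence of Lemma~\ref{L2}, and your argument --- apply the scalar inequality pointwise in $x$, integrate over $\Omega$, and commute the spatial integral with the time integral defining ${}^{\CFC}\Dat$ via Fubini --- is precisely the standard justification being left implicit. Your additional remark that one really needs $\varphi\in H^1(0,T;L^2(\Omega))$ (rather than mere existence of the fractional derivative) to make the pointwise application of Lemma~\ref{L2} and the Fubini step rigorous is a useful precision that the paper glosses over.
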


The next section introduces the time-fractional SEIR PDE model under consideration.
\section{Mathematical model}\label{S3}
In the epidemiological literature, treatment and vaccination strategies are used to reduce the spread of infectious diseases or to achieve durable immunity in the population by analyzing the consequences of vaccinating vulnerable individuals and treating infected individuals. 
We assume that the total population $N$ consists of four subgroups of individuals:
\begin{description}\setlength{\itemsep}{0cm}
    \item[Susceptible $S(t,x)$:] Individuals in this compartment are susceptible to the disease but have not yet been infected.
    \item[Exposed $E(t,x)$:] Individuals in this compartment have been exposed to the infectious agent, but are not yet infectious themselves. This latent period represents the time between exposure to the pathogen and the onset of infectivity.
    \item[Infectious $I(t,x)$:] Individuals in this compartment are infectious and can transmit the disease to susceptible individuals.
    \item[Removed $R(t,x)$:] Individuals in this compartment have recovered from infection and are considered to have acquired immunity.
\end{description}
 We assume that the vaccines of all susceptible individuals are transferred directly to the category of removed individuals. 
Table~\ref{Tab1} presents the transmission coefficients applicable to the SEIR model, while Figure~\ref{F1} provides a comprehensive visualization of the transmission dynamics among the four categories.

\begin{table}[ht]
\begin{minipage}{0.49\linewidth}
\centering
\caption{Transmission coefficients for the SEIR model.}\label{Tab1}
\begin{tabular}{|c||c|}
\hline 
$1>\beta>0$ & Birth rate\\ 
\hline
$1>\kappa>0$ & Disease transmission rate\\ 
\hline
$1>\mu>0$ & Effective contact rate\\ 
\hline
$1>\xi>0$ & Natural mortality rate\\ 
\hline
$1>\eta>0$ & Recovery rate\\ 
\hline
\end{tabular}
\end{minipage}
\hfill
\begin{minipage}{0.49\linewidth}
\centering
\begin{tikzpicture}[node distance=2cm]
\node (S) [rectangle, draw, minimum size=0.7cm, fill=red!30] {S};
\node (E) [rectangle, draw, minimum size=0.7cm, fill=orange!30, right of=S] {E};
\node (I) [rectangle, draw, minimum size=0.7cm, fill=blue!30, right of=E] {I};
\node (R) [rectangle, draw, minimum size=0.7cm, fill=green!30, right of=I] {R}; 
\draw[->] (-1.5,0) -- ++(S) node[midway,above]{$\beta N$};
\draw[->] (S.north) -| (0,1) node[near end,left]{$\xi S$};
\draw [->] (S) -- (E) node[midway,above]{$\mu SI$};
\draw[->] (E.north) -| (2,1) node[near end,left]{$\xi E$};
\draw [->] (E) -- (I) node[midway,above]{$\kappa E$};
\draw[->] (I.north) -| (4,1) node[near end,left]{$\xi I$};
\draw [->] (I) -- (R) node[midway,above]{$\eta I$};
\draw[->] (R.north) -| (6,1) node[near end,left]{$\xi R$};
\draw [->] (S.south) -| (0,-1) -- (6,-1) node[midway,above]{$u S$} -| (R.south) ;
\end{tikzpicture}

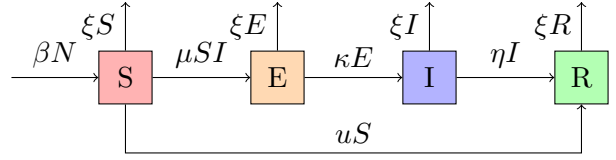
\captionof{figure}{Transmission dynamics in the SEIR model.}\label{F1}
\end{minipage}
\end{table}

Let $\Omega$ be a fixed and bounded domain in $\mathbb{R}^2$ with a smooth boundary denoted by $\partial \Omega$. 
To describe the spatial spreading effect of the disease, we assume that $\lambda_1,\lambda_2,\lambda_3,\lambda_4 >0$ are the respective diffusion coefficients for the four compartments. 
The optimal control system is then defined by
\begin{equation}\label{E3.1}
\begin{cases}
{ }^{\CFC} \Dat S - \lambda_1 \Delta S 
&= \beta N - \mu S I - (\xi + u) S =: \Phi_1(S, E, I, R), \\
{ }^{\CFC} \Dat E - \lambda_2 \Delta E 
&= \mu S I -(\xi+ \kappa) E =: \Phi_2(S, E, I, R),\\
{ }^{\CFC} \Dat I - \lambda_3 \Delta I 
&= \kappa E - (\xi+ \eta) I =: \Phi_3(S, E, I, R),\\
{ }^{\CFC} \Dat R - \lambda_4 \Delta R 
&= uS + \eta I -\xi R =: \Phi_4(S, E, I, R),
\end{cases} (t, x) \in \Omega_T = [0, T]\times\Omega, 
\end{equation}
with
\begin{equation}\label{E3.2}
   \nabla S\cdot\vec{n} = \nabla E\cdot\vec{n} = \nabla I\cdot\vec{n} = \nabla R\cdot\vec{n} = 0, 
   \quad (t,x)\in \partial\Omega_T = [0, T]\times\partial\Omega,
\end{equation}
where $\vec{n}$ being the normal to $\partial\Omega$, and
\begin{equation}\label{E3.3}
     S(0,x)=S_0, \quad E(0,x)=E_0, \quad I(0,x)=I_0, \quad R(0,x)=R_0, \quad  x \in \Omega.
\end{equation}
The objective functional that will contribute to reducing the intensity of infected people and the costs of the vaccination program, is given by
\begin{equation}\label{E3.4}
\cJ\bigl( (S, E, I, R), u\bigr) 
= \int_\Omega I^2(T, x) \,dx 
+ \int_0^T \int_\Omega I^2(t, x) \,dx dt 
+ \sigma \int_0^T \int_\Omega u^2(t, x) \,dx dt,
\end{equation}
where $\sigma$ is a constant weight associated with the vaccination control $u$, and
\begin{equation}\label{E3.5}
    u\in U_{ad} = \bigl\{w \in L^{\infty}(\Omega_T) / \|w\|_{L^{\infty}(\Omega_T)}<1 \text { and } w>0\bigr\}.
\end{equation}

Let $\omega = (\omega_1, \omega_2, \omega_3, \omega_4) = (S, E, I, R)$, 
$\omega^0 =( \omega^0_1, \omega^0_2, \omega^0_3, \omega^0_4)=(S_0, E_0, I_0, R_0)$, 
and $\lambda = (\lambda_1, \lambda_2, \lambda_3, \lambda_4)$. 
Then, the problem~\eqref{E3.1}--\eqref{E3.3} can be rewritten in the form
\begin{equation}\label{E3.6}
\begin{cases}
{ }^{\CFC} \Dat \omega(t) + \cL \omega(t) = \Phi(\omega(t)),\\
\omega(0)=\omega^0,
\end{cases} t\in[0,T],
\end{equation}
with $\omega(t)(\cdot)=\omega(t,\cdot)$, $\Phi = (\Phi_1, \Phi_2, \Phi_3, \Phi_4)$, and
\begin{equation*}
\begin{gathered}
\cL\colon\quad \begin{array}{l}
D(\cL)\subset L(\Omega)\to L(\Omega) \\
y \quad \to \quad -\lambda\Delta y
\end{array},\\
  D(\cL)=\Big\{\vartheta\in \bigl(H^2(\Omega)\bigr)^4 / \ \nabla \vartheta_i\cdot\vec{n} = 0, i=1,2,3,4\Big\},
\end{gathered}
\end{equation*}
where $L(\Omega) = \bigl(L^2(\Omega)\bigr)^4$.
\footnotetext[1]{Recall that $H^2(\Omega)$ is the Sobolev space $W^{2,2}(\Omega)$ (it is a Hilbert space).
$H^1_0(\Omega)$ is the closure of the smooth functions with compact support in $\Omega$ in $H^1(\Omega) = W^{1, 2 }(\Omega)$ under the associated Sobolev norm.}

\section{Existence and uniqueness of the solution}\label{S4}
We multiply the problem \eqref{E3.6} by a function $\varphi \in \cH(\Omega) = \bigl(H^1_0(\Omega)\bigr)^4$ and get 
\begin{equation}\label{E4.1}
\begin{cases}
\bigl\langle { }^{\CFC} \Dat\omega , \varphi \bigr\rangle 
+ \cB(\omega, \varphi) 
= \bigl\langle \Phi(\omega) , \varphi \bigr\rangle,\\
\bigl\langle \omega(0), \varphi \bigl\rangle 
= \bigl\langle \omega^0, \varphi \bigr\rangle,
\end{cases} t\in[0,T],
\end{equation}
where $\langle \cdot , \cdot \rangle := \langle \cdot , \cdot \rangle_{L(\Omega)}$ and $\cB$ is the bilinear form defined in $\cH(\Omega)$ by
\begin{equation*}
      \cB(\omega, \varphi) = \langle \nabla\omega , \nabla \varphi \rangle = \int_\Omega \nabla\omega \nabla \varphi \,dx =: \langle \cL\omega, \varphi \rangle.
\end{equation*}
Assuming that $-\Delta$ is a uniformly elliptic operator, the spectrum of $\cL$ consists of eigenvalues $\{\varrho_k\}_{k=1}^\infty$ and their corresponding orthogonal eigenfunctions $\{\omega^k\}_{k=1}^\infty$ within $D(\cL)$, satisfying  
\begin{equation*}
    \cB(\omega^k, \varphi) = \varrho_k \langle \omega^k , \varphi \rangle.
\end{equation*}
Hence, we have
\begin{equation}\label{E4.2}
    \|\varphi\|_{\cH(\Omega)}^2 
    = \sum_{k=0}^\infty \varrho_k \langle \varphi, \omega^k \rangle.
\end{equation}

\begin{lemma}\label{L3}
Let $\omega^k \in H^1\bigl(0, T; L(\Omega)\bigr)$. 
Then, the solution of the problem
\begin{equation}\label{E4.3}
\begin{cases}
{ }^{\CFC} \Dat \omega^k + \varrho_k \omega^k 
   = \Phi^k(t) =:\Phi(\omega^k),\\
\omega^k(0)=\omega^k_0,
\end{cases} t\in[0,T],
\end{equation}
is given by
\begin{equation}\label{E4.4}
   \omega^k(t)= \zeta_k \exp(-\varpi_k t) \omega^k_0
   +\frac{(1-\alpha) \zeta_k}{M(\alpha)}\,\Phi^k(t) 
   + \Lambda_k \int_0^t \exp\bigl(-\varpi_k(t-s)\bigr) \,\Phi^k(s) \,ds,
\end{equation}
where
\begin{equation}\label{E4.5}
   \varpi_k = \frac{\alpha \varrho_k}{M(\alpha)+(1-\alpha) \varrho_k}, \quad
   \zeta_k
   = \frac{M(\alpha)}{M(\alpha)+(1-\alpha) \varrho_k}, \ \text{ and } \ \Lambda_k=\frac{\zeta_k \bigl(\alpha + (1-\alpha) \varpi_k\bigr)}{M(\alpha)}.
\end{equation}
\end{lemma}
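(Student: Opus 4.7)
The plan is to reduce the fractional problem \eqref{E4.3} to a classical first-order linear ODE in $\omega^k$, which can then be integrated by the usual exponential integrating factor. The reduction hinges on the fact that the exponential kernel $e^{-\gamma(t-y)}$ appearing in Definition~\ref{D1}.a itself satisfies a first-order ODE in $t$: differentiating the integral defining $H(t) := { }^{\CFC} \Dat \omega^k(t)$ under the integral sign gives the identity $(1-\alpha) H'(t) + \alpha H(t) = M(\alpha)(\omega^k)'(t)$, since $(1-\alpha)\gamma = \alpha$.

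Substituting $H = \Phi^k - \varrho_k\omega^k$, which is \eqref{E4.3} rewritten, and collecting the terms in $(\omega^k)'$ and $\omega^k$ yields
\begin{equation*}
  \bigl(M(\alpha)+(1-\alpha)\varrho_k\bigr)(\omega^k)'(t) + \alpha\varrho_k\,\omega^k(t)
  = (1-\alpha)(\Phi^k)'(t) + \alpha\Phi^k(t).
\end{equation*}
After dividing through by $M(\alpha)+(1-\alpha)\varrho_k$, this is precisely the linear ODE
\begin{equation*}
    (\omega^k)'(t) + \varpi_k\,\omega^k(t) = \frac{(1-\alpha)\zeta_k}{M(\alpha)}(\Phi^k)'(t) + \frac{\alpha\zeta_k}{M(\alpha)}\Phi^k(t),
\end{equation*}
with $\varpi_k$ and $\zeta_k$ as in \eqref{E4.5}. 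Multiplying by the integrating factor $e^{\varpi_k t}$, integrating from $0$ to $t$, and then applying integration by parts to the $(\Phi^k)'$ integral produces exactly the three summands appearing in \eqref{E4.4}: a decaying-exponential piece from the initial datum (combined with the boundary contribution $\Phi^k(0)$), the pointwise term in $\Phi^k(t)$ with coefficient $\frac{(1-\alpha)\zeta_k}{M(\alpha)}$, and the convolution $\int_0^t e^{-\varpi_k(t-s)}\Phi^k(s)\,ds$ with a coefficient that collapses, after invoking the definition of $\varpi_k$, to $\Lambda_k$.

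An equivalent route would be to apply the Laplace transform to \eqref{E4.3}, use the formula $\mathcal{L}\bigl\{{ }^{\CFC} \Dat f\bigr\}(p) = \frac{M(\alpha)(pF(p)-f(0))}{(1-\alpha)p+\alpha}$ (which follows from Definition~\ref{D1}.a and the convolution rule), solve the resulting algebraic equation for $\hat{\omega}^k(p)$, decompose the factor $\frac{(1-\alpha)p+\alpha}{\bigl(M(\alpha)+(1-\alpha)\varrho_k\bigr)(p+\varpi_k)}$ by partial fractions into a constant plus a $\frac{c}{p+\varpi_k}$ piece, and invert termwise via the convolution theorem.

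The main obstacle is bookkeeping: the three constants $\varpi_k$, $\zeta_k$, $\Lambda_k$ each arise from a slightly different algebraic manipulation of $\alpha$, $1-\alpha$, $M(\alpha)$ and $\varrho_k$, and the boundary term $\Phi^k(0)$ produced by integration by parts must be reconciled with the initial datum $\omega^k_0$ in order to recognise the prefactor $\zeta_k$ in front of $e^{-\varpi_k t}\omega^k_0$. The key observation here is that Definition~\ref{D1}.a forces ${ }^{\CFC} \Dat \omega^k(0)=0$, so evaluating \eqref{E4.3} at $t=0$ gives the compatibility relation $\Phi^k(0)=\varrho_k\omega^k_0$, and this is exactly what is needed to collapse the initial and boundary contributions into the single coefficient $\zeta_k$. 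Once this identification is made, the remaining calculations are routine linear-ODE calculus.
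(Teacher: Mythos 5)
Your primary route is correct in substance but genuinely different from the paper's. The paper first applies the $\CF$ fractional integral (Lemma~\ref{L1}) to turn \eqref{E4.3} into the integro--differential identity \eqref{E4.6}, then takes the Laplace transform, solves algebraically for $\hat{\omega}^k(p)$, and inverts by the convolution theorem --- essentially the ``equivalent route'' you sketch in your third paragraph. Your main argument instead differentiates the exponential kernel to obtain $(1-\alpha)H'+\alpha H=M(\alpha)(\omega^k)'$ and reduces \eqref{E4.3} to a classical linear ODE solved by an integrating factor; this is the same as differentiating \eqref{E4.6} in $t$, and it is more elementary in that it avoids the Laplace transform entirely. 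What your route adds is the explicit compatibility condition $\Phi^k(0)=\varrho_k\omega^k_0$ forced by ${}^{\CFC}\Dat\omega^k(0)=0$: differentiation discards the information carried by \eqref{E4.6} at $t=0$, so this relation is genuinely needed to absorb the boundary term $\Phi^k(0)$ from the integration by parts into the prefactor $\zeta_k$ (and without it \eqref{E4.3} admits no solution at all). The paper's Laplace route produces $\zeta_k e^{-\varpi_k t}\omega^k_0$ directly and never states this hypothesis, so your observation is a useful clarification.

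One caveat on the bookkeeping you deferred. Carrying your integrating-factor computation to the end, the convolution coefficient is $b-a\varpi_k$ with $a=\frac{(1-\alpha)\zeta_k}{M(\alpha)}$, $b=\frac{\alpha\zeta_k}{M(\alpha)}$, that is
\begin{equation*}
\frac{\zeta_k\bigl(\alpha-(1-\alpha)\varpi_k\bigr)}{M(\alpha)}=\frac{\alpha\zeta_k^2}{M(\alpha)},
\end{equation*}
with a \emph{minus} sign, not the plus sign in the stated $\Lambda_k$ of \eqref{E4.5}. The paper's own inversion yields the same minus sign, since $\frac{p}{p+\varpi_k}=1-\frac{\varpi_k}{p+\varpi_k}$ contributes $-\varpi_k$ times the convolution on top of the $\frac{\alpha\zeta_k}{M(\alpha)}$ term. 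So your assertion that the coefficient ``collapses to $\Lambda_k$'' is not literally true as \eqref{E4.5} is printed; the discrepancy looks like a sign slip in the paper rather than a flaw in your method, and it is harmless for the later estimates (one still gets $\varrho_k\Lambda_k^2\le 4\alpha^2/\bigl((1-\alpha)^2\varrho_1\bigr)$), but you should verify the match rather than assert it.
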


\begin{proof}
Applying the $\CF$ fractional integral to the problem~\eqref{E4.3}, we obtain
\begin{equation}\label{E4.6}
   \omega^k(t) - \omega^k(0) 
   = -\frac{(1-\alpha)\varrho_k}{M(\alpha)}\,\omega^k(t) 
   - \frac{\alpha \varrho_k}{M(\alpha)} \int_0^t \omega^k(s)\,ds 
   + \frac{1-\alpha}{M(\alpha)} \,\Phi^k(t) + \frac{\alpha}{M(\alpha)} \int_0^t \Phi^k(s)\,ds.
\end{equation}
If we subject both sides of the equation~\eqref{E4.6} to the Laplace transform, we get
\begin{equation*}
   \hat{\omega}^k(p) - \frac{\omega^k(0)}{p} 
   = -\frac{(1-\alpha)\varrho_k}{M(\alpha)}\,\hat{\omega}^k(p) 
   - \frac{\alpha \varrho_k}{M(\alpha)} \frac{\hat{\omega}^k(p)}{p} 
   + \frac{1-\alpha}{M(\alpha)} \,\hat{\Phi}^k(p)
   + \frac{\alpha}{M(\alpha)} \frac{\hat{\Phi}^k(p)}{p}.
\end{equation*}
Then,
\begin{equation*}
  \frac{pM(\alpha)+ p(1-\alpha)\varrho_k + \alpha\varrho_k}{pM(\alpha)}\, \hat{\omega}^k(p) 
  = \frac{\omega^k(0)}{p} + \frac{1-\alpha}{M(\alpha)}\,\hat{\Phi}^k(p)+ \frac{\alpha}{M(\alpha)} \frac{\hat{\Phi}^k(p)}{p}.
\end{equation*}
Now a straightforward calculation gives
\begin{equation*}
\begin{split}
\hat{\omega}^k(p) =& \frac{M(\alpha)}{M(\alpha)+ (1-\alpha)\varrho_k} 
\biggl(\frac{1}{p + \frac{\alpha\varrho_k}{M(\alpha)+ (1-\alpha)\varrho_k}}\biggr) \omega^k(0)\\
&\qquad + \frac{1-\alpha}{M(\alpha)+ (1-\alpha)\varrho_k}  
 \biggl(\frac{p}{p + \frac{\alpha\varrho_k}{M(\alpha)+ (1-\alpha)\varrho_k}}\biggr) \hat{\Phi}^k(p)\\
&\qquad+ \frac{\alpha}{M(\alpha)+ (1-\alpha)\varrho_k}  
  \biggl(\frac{1}{p + \frac{\alpha\varrho_k}{M(\alpha)+ (1-\alpha)\varrho_k}}\biggr) \hat{\Phi}^k(p).
\end{split}
\end{equation*}
We set
\begin{equation*}
    \varpi_k 
    = \frac{\alpha \varrho_k}{M(\alpha)+(1-\alpha) \varrho_k} \ \text{ and } \ \zeta_k = \frac{M(\alpha)}{M(\alpha)+(1-\alpha) \varrho_k}.
\end{equation*}
Therefore, we obtain
\begin{equation*}
\hat{\omega}^k(p) = \zeta_k \frac{1}{p + \varpi_k} \,\omega^k(0) 
+ \frac{(1-\alpha)\zeta_k}{M(\alpha)} \frac{1}{1 + (\varpi_k^{-1}p)^{-1}} \,\hat{\Phi}^k(p)+ \frac{\alpha \zeta_k}{M(\alpha)} \frac{1}{p + \varpi_k} \,\hat{\Phi}^k(p).
\end{equation*}
Recall that
\begin{equation*}
   \mathscr{L}^{-1}\bigl(\hat{f}(p) \hat{g}(p)\bigr)(t) = (f * g)(t), 
\end{equation*}
where '$*$' denotes a convolution operator. 
Hence by applying the inverse Laplace transform, the solution of \eqref{E4.3} is given by
\begin{equation*}
    \omega^k(t)= \zeta_k \exp(-\varpi_k t) \omega^k_0
    +\frac{(1-\alpha) \zeta_k}{M(\alpha)} \,\Phi^k(t) 
    + \Lambda_k \int_0^t \exp\bigl[-\varpi_k(t-s)\bigr] \Phi^k(s) \,ds,
\end{equation*}
where $\Lambda_k=\frac{\zeta_k (\alpha + (1-\alpha) \varpi_k)}{M(\alpha)}$.
\end{proof}

\medskip
Building on the result of Lemma~\ref{L3}, we establish the following important result
\begin{theorem}\label{T1}
Let $\omega^0 \in L(\Omega)$. 
Then, \eqref{E4.1} has one and only one solution in $L^2\bigl(0, T; \cH(\Omega)\bigr) \cap L^\infty\bigl(0, T; L(\Omega)\bigr)$ given by
\begin{equation*}
     \omega(t, x) = \sum_{k=1}^{\infty} 
     \Bigl[\zeta_k \exp\bigl(-\varpi_k t\bigr) \,\omega^k_0
     +\frac{(1-\alpha) \zeta_k}{M(\alpha)}\,\Phi^k(t)\Bigr] e_k 
     + \Bigl[\Lambda_k \int_0^t \exp\bigl(-\varpi_k(t-s)\bigr)\, \Phi^k(s) \,ds\Bigr] e_k,
\end{equation*}
where the constants $\zeta_k, \varpi_k$, and $\Lambda_k$ are given by \eqref{E4.5}. 
In addition, there is a constant $C:= C(\alpha, T, \varrho_1)$ with
\begin{equation*}
   \|\omega\|_{L^2(0, T; \cH(\Omega))} 
     \le C \bigl(\|\omega^0\|_{\cH(\Omega)} + \|\Phi\|_{L(\Omega_T)} \bigr),
\end{equation*}
 i.e.\ the solution can be bounded by the data $\omega^0$, $\Phi$.
\end{theorem}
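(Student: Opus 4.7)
The plan is to construct the solution via spectral expansion in the eigenbasis of the Neumann Laplacian and then derive the stability bound by energy methods based on Corollary~\ref{C1}.

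First, I would expand the initial data and the source in the orthonormal eigenbasis $\{e_k\}_{k\ge 1}$ of $\cL$ in $L(\Omega)$, writing $\omega^0 = \sum_k \omega^0_k\, e_k$ and $\Phi(\omega(t)) = \sum_k \Phi^k(t)\, e_k$. Testing the weak formulation \eqref{E4.1} against $\varphi = e_k$ decouples the system into the scalar fractional ODE \eqref{E4.3} for each mode. Applying Lemma~\ref{L3} mode by mode gives the coefficient $\omega^k(t)$ explicitly, and assembling the series yields the announced representation. Convergence of this series in $L^\infty\bigl(0,T;L(\Omega)\bigr)$ follows because $\varpi_k\ge 0$, so $\exp(-\varpi_k t)\le 1$, and $\zeta_k,\Lambda_k$ are bounded uniformly in $k$; membership in $L^2\bigl(0,T;\cH(\Omega)\bigr)$ is then obtained from identity \eqref{E4.2} combined with a mode-wise bound on $\varrho_k|\omega^k(t)|^2$ derived from \eqref{E4.4}.

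Second, for the stability estimate I would test \eqref{E4.1} against $\varphi=\omega(t)$. By Corollary~\ref{C1} and the coercivity of $\cB$, this yields
\begin{equation*}
\tfrac{1}{2}\,{}^{\CFC}\Dat\|\omega(t)\|_{L(\Omega)}^2 + \|\omega(t)\|_{\cH(\Omega)}^2
\le \tfrac{1}{2}\|\Phi(\omega(t))\|_{L(\Omega)}^2 + \tfrac{1}{2}\|\omega(t)\|_{L(\Omega)}^2.
\end{equation*}
Applying the fractional integral ${}^{\CF}I^{\alpha}$ and invoking Lemma~\ref{L1} removes the fractional derivative and produces
\begin{equation*}
\|\omega(t)\|_{L(\Omega)}^2 + 2\,{}^{\CF}I^{\alpha}\|\omega(t)\|_{\cH(\Omega)}^2
\le \|\omega^0\|_{L(\Omega)}^2 + {}^{\CF}I^{\alpha}\bigl(\|\Phi\|_{L(\Omega)}^2+\|\omega\|_{L(\Omega)}^2\bigr).
\end{equation*}
A Gronwall-type argument, exploiting the exponential kernel of ${}^{\CF}I^{\alpha}$, controls $\|\omega\|_{L^\infty(0,T;L(\Omega))}$; substituting back and using that the coercivity constant of $\cL$ is $\varrho_1$ transfers this pointwise-in-time bound into the desired $L^2\bigl(0,T;\cH(\Omega)\bigr)$ estimate with $C=C(\alpha,T,\varrho_1)$.

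Third, uniqueness follows by applying the same energy identity to the difference of two solutions: since $\Phi$ depends Lipschitz-continuously on $\omega$ (the only nonlinearity being the bilinear term $\mu SI$, which is locally Lipschitz once the $L^\infty$-bound from the previous step is in hand), the difference satisfies a homogeneous inequality that forces it to vanish.

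The main obstacle I anticipate is the step from the pointwise fractional inequality produced by Corollary~\ref{C1} to a clean Gronwall argument: because ${}^{\CF}I^{\alpha}$ contributes both a pointwise multiple of the integrand and a genuine time integral with an exponential kernel, one has to treat these contributions separately and absorb the pointwise term on the left before iterating. A secondary issue is handling $\Phi$ as a function of $\omega$ rather than as pure data — the cleanest route is to treat the linear theory first (taking $\Phi$ as a given source in $L(\Omega_T)$ to obtain the estimate) and then close the loop by a Picard iteration on a small time interval, extending to $[0,T]$ by the a priori bound.
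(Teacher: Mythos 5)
Your construction of the solution coincides with the paper's: both project onto the eigenbasis of $\cL$, reduce \eqref{E4.1} to the scalar modal problem \eqref{E4.3}, and invoke Lemma~\ref{L3} mode by mode. Where you genuinely diverge is in how the estimates and uniqueness are obtained. The paper never tests against $\omega$ itself; it works directly from the explicit representation \eqref{E4.4}, proving the elementary bounds $\varrho_k\zeta_k^2 \le M(\alpha)^2/\bigl((1-\alpha)^2\varrho_1\bigr)$ and $\varrho_k\Lambda_k^2 \le 4\alpha^2/\bigl((1-\alpha)^2\varrho_1\bigr)$, and uses them to show that the Galerkin partial sums $\omega^m$ form a Cauchy sequence in $L^2\bigl(0,T;\cH(\Omega)\bigr)$; the constant $C(\alpha,T,\varrho_1)$ and the stated bound fall out of that same tail estimate. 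For the $L^\infty\bigl(0,T;L(\Omega)\bigr)$ membership the paper uses a model-specific trick: it sums the four equations to obtain a closed inequality for the total population $N$ (this is where the standing assumption $\beta\le\xi$ enters) and then applies Lemma~\ref{L1} and Gronwall. Your route --- Corollary~\ref{C1}, coercivity of $\cB$, application of ${}^{\CF}I^{\alpha}$, and a fractional Gronwall argument --- is the standard parabolic energy method and is equally valid; it buys generality (no reliance on the explicit solution formula, no need for $\beta\le\xi$, and an actual uniqueness proof via the difference of two solutions, which the paper announces but never carries out) at the price of having to absorb the pointwise part $\tfrac{1-\alpha}{M(\alpha)}(\cdot)$ of the $\CF$ integral on the left, a difficulty you correctly identify. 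Your closing remark about $\Phi$ depending on $\omega$ is also well taken: the paper silently treats $\Phi^k$ as prescribed data throughout the Laplace-transform solution and the tail estimates, so your plan of establishing the linear theory first and closing the nonlinearity by Picard iteration is, if anything, more complete than the printed argument.
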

\begin{proof}
Let $\mathcal{V}_m$ be a subspace of $\cH(\Omega)$ generated by $\{e_i\}_{i=1}^m$. We want to find the $\omega^m$ solution of the following FDE system
\begin{equation*}
\begin{cases}
\bigl\langle { }^{\CFC} \Dat\omega^m , \varphi \bigr\rangle 
+ \cB(\omega^m, \varphi) = \bigl\langle \Phi(\omega^m) , \varphi \bigr\rangle,\\
\omega(0) = \omega^0.
\end{cases} \forall \varphi \in \mathcal{V}_m, \ t\in[0,T],
\end{equation*}
Using the fact that $\cB(\omega, e_k) = \varrho_k \omega^k$, we obtain
\begin{equation*}
\begin{cases}
{ }^{\CFC} \Dat \omega^k + \varrho_k \omega^k 
    = \Phi^k(t) =: \Phi(\omega^k),\\
\omega^k(0)=\omega^k_0,
\end{cases} t\in[0,T],
\end{equation*}
which admits a solution given by \eqref{E4.4}.
Now we have to prove that the solution of \eqref{E4.1} is unique and belongs to $L^2\bigl(0, T; \cH(\Omega)\bigr) \cap L^\infty\bigl(0, T; L(\Omega)\bigr)$.

Since $\omega^m \in \mathcal{V}_m$, we have
\begin{equation*}
   \omega^m = \sum_{k=1}^m \langle \omega , e_k \rangle e_k 
   = \sum_{k=1}^m \omega^k e_k.
\end{equation*}
Moreover,
\begin{equation*}
   \omega^m(t, x) = \sum_{k=1}^{m} \Bigl[\zeta_k \exp\bigl(-\varpi_k t\bigr) \omega^k_0+\frac{(1-\alpha) \zeta_k}{M(\alpha)} \Phi^k(t)\Bigr] e_k 
   + \sum_{k=1}^{m} \Bigl[\Lambda_k \int_0^t \exp\bigl(-\varpi_k(t-s)\bigr) \Phi^k(s) \,ds\Bigr] e_k.
\end{equation*}
Let $m, p \in \mathbb{N}^*$ such that $p>m$. Then,
\begin{equation*}
     \omega^p(t, x) - \omega^m(t, x) = \sum_{k=m+1}^p \omega^k e_k.
\end{equation*}
We also have 
\begin{equation*}
\begin{split}
    \cB(\omega^p - \omega^m, \omega^p - \omega^m) 
    &= \sum_{k=m+1}^p \varrho_k (\omega^k)^2\\
    &\le 3 \sum_{k=m+1}^{p} \varrho_k\Bigl[\zeta_k \exp(-\varpi_k t) \omega^k_0\Bigr]^2 
    + 3 \sum_{k=m+1}^{p} \varrho_k\Bigl[\frac{(1-\alpha) \zeta_k}{M(\alpha)}\,\Phi^k(t)\Bigr]^2\\
    &\qquad + 3 \sum_{k=m+1}^{p} \varrho_k \Bigl[\Lambda_k \int_0^t \exp\bigl(-\varpi_k(t-s)\bigr) \,\Phi^k(s) \,ds\Bigr]^2.
\end{split}
\end{equation*}
Because of
\begin{align*}
\varpi_k &= \frac{\alpha \varrho_k}{M(\alpha)+(1-\alpha) \varrho_k} 
\le \frac{\alpha \varrho_k}{(1-\alpha) \varrho_k} 
= \frac{\alpha}{(1-\alpha)},\\
\varrho_k \zeta_k^2 &= \frac{\varrho_k M(\alpha)^2}{\bigl(M(\alpha)+(1-\alpha) \varrho_k\bigr)^2} 
\le \frac{M(\alpha)^2}{(1-\alpha)^2 \varrho_k} 
\le \frac{M(\alpha)^2}{(1-\alpha)^2 \varrho_1}, \\
\varrho_k \Lambda_k^2 &= \varrho_k \zeta_k^2 \frac{ \bigl(\alpha + (1-\alpha) \varpi_k\bigr)^2}{M(\alpha)^2} 
\le \frac{M(\alpha)^2}{(1-\alpha)^2 \varrho_1} \frac{ \bigl(\alpha + \frac{\alpha (1-\alpha)}{(1-\alpha)}\bigr)^2}{M(\alpha)^2} 
= \frac{4\alpha^2}{(1-\alpha)^2 \varrho_1},
\end{align*}
we obtain
\begin{equation*}
\|\omega^p - \omega^m\|_{L^2\bigl(0, T; \cH(\Omega)\bigr)}^2 
= \int_0^T \cB(\omega^p - \omega^m, \omega^p - \omega^m) \,dt 
\le 3 ( \mathcal{A}_1 + \mathcal{A}_2 + \mathcal{A}_3 ),
\end{equation*}
where
\begin{equation*}
\begin{split}
\bullet \quad \mathcal{A}_1 :&= \sum_{k=m+1}^{p} \varrho_k \int_0^T \Bigl[\zeta_k \exp(-\varpi_k t) \omega^k_0\Bigr]^2 dt 
= \sum_{k=m+1}^{p} \varrho_k \zeta_k^2 \bigl[\omega^k_0\bigr]^2 \int_0^T \bigl[\exp(-\varpi_k t)\bigr]^2\,dt\\ 
&\le \sum_{k=m+1}^{p} \frac{M(\alpha)^2}{(1-\alpha)^2 \varrho_1} \bigl[\omega^k_0\bigr]^2 T 
= \frac{M(\alpha)^2 T}{(1-\alpha)^2 \varrho_1} \sum_{k=m+1}^{p} \bigl[\omega^k_0\bigr]^2.\\
\bullet \quad \mathcal{A}_2 :&= \sum_{k=m+1}^{p} \varrho_k \int_0^T \biggl[\frac{(1-\alpha) \zeta_k}{M(\alpha)} \,\Phi^k(t)\biggr]^2\,dt 
= \sum_{k=m+1}^{p} \varrho_k \zeta_k^2 \frac{(1-\alpha)^2}{M(\alpha)^2} \int_0^T \bigl[\Phi^k(t)\bigr]^2 \,dt\\
&\le \sum_{k=m+1}^{p} \frac{M(\alpha)^2}{(1-\alpha)^2 \varrho_1} \frac{(1-\alpha)^2}{M(\alpha)^2} \int_0^T \bigl[\Phi^k(t)\bigr]^2\,dt 
= \frac{1}{\varrho_1} \sum_{k=m+1}^{p} \int_0^T \bigl[\Phi^k(t)\bigr]^2\,dt.\\
\bullet \quad \mathcal{A}_3 :&= \sum_{k=m+1}^{p} \varrho_k \int_0^T \biggl[\Lambda_k \int_0^t \exp\bigl(-\varpi_k(t-s)\bigr)\, \Phi^k(s) \,ds\biggr]^2 \,dt\\
&= \sum_{k=m+1}^{p} \varrho_k \Lambda_k^2 \int_0^T \biggl[ \int_0^t 
\exp\bigl(-\varpi_k(t-s)\bigr) \,\Phi^k(s) \,ds\biggr]^2 \,dt\\
&\le \sum_{k=m+1}^{p} \frac{4\alpha^2}{(1-\alpha)^2 \varrho_1} 
\int_0^T \biggl[ \int_0^t \exp\bigl(-\varpi_k(t-s)\bigr)\,\Phi^k(s) \,ds\biggr]^2\, dt\\
%
&\le \frac{4\alpha^2}{(1-\alpha)^2 \varrho_1} \sum_{k=m+1}^{p}  
\int_0^T \biggl[ \Bigl(\int_0^t \bigl(\exp(-\varpi_kz)\bigr)\,dz\Bigr) \Bigl(\int_0^t \bigl(\Phi^k(s)\bigr)^2\,ds\Bigr)\biggr] \,dt\\
&\le \frac{4\alpha^2 T^2}{(1-\alpha)^2 \varrho_1} \sum_{k=m+1}^{p} 
\int_0^T \bigl[\Phi^k(t)\bigr]^2 \,dt.\\
\end{split}
\end{equation*}
Afterwards,
\begin{equation}\label{E4.7}
\begin{split}
\|\omega^p - \omega^m\|_{L^2(0, T; \cH(\Omega))} 
&\le C_1(\alpha, T, \varrho_1) \biggl( \sum_{k=m+1}^{p} \bigl[\omega^k_0\bigr]^2\biggr)^\frac{1}{2}\\ 
&\qquad + C_2(\alpha, T, \varrho_1) \biggl( \sum_{k=m+1}^{p}  
\int_0^T \bigl[\Phi^k(t)\bigr]^2 \,dt\biggr)^\frac{1}{2}.
\end{split}
\end{equation}
From the fact that $\omega^0 \in L(\Omega)$, we have 
\begin{equation*}
    \lim_{p, m \to \infty} \|\omega^p - \omega^m\|_{L^2(0, T; \cH(\Omega))} = 0,
\end{equation*}
which implies that $(\omega^m)$ is a Cauchy sequence in $L^2\bigl(0, T; \cH(\Omega)\bigr)$. 
Therefore, we obtain
\begin{equation*}
     \omega^m\to\omega \ \text{ in } \ L^2\bigl(0, T; \cH(\Omega)\bigr).
\end{equation*}
In addition, by \eqref{E4.7}, for $C = \max\bigl(C_1(\alpha, T, \varrho_1), C_2(\alpha, T, \varrho_1) \bigr)$, we get
\begin{equation*}
    \|\omega\|_{L^2(0, T; \cH(\Omega))} 
    \le C \bigl(\|\omega^0\|_{\cH(\Omega)} + \|\Phi\|_{L(\Omega_T)} \bigr).
\end{equation*}

In the sequel we will assume that the birth rate $\beta$ is either less than or equal to the natural mortality rate $\xi$.

Now, starting from \eqref{E3.1}, we have
\begin{equation*}
    { }^{\CFC} \Dat N - \lambda_1\Delta \omega_1 - \lambda_2\Delta \omega_2 - \lambda_3\Delta \omega_3 - \lambda_4\Delta \omega_4 = \beta N - \xi N,
\end{equation*}
where $N = \omega_1 + \omega_2 + \omega_3 + \omega_4$. 
Due to the linearity of $-\Delta$, there exists a constant $C$ such that
\begin{equation*}
    { }^{\CFC} \Dat N - C\Delta N \le (\beta - \xi) N.
\end{equation*}
Utilizing \eqref{E2.4} and \eqref{E2.6}, we get
\begin{equation*}
\begin{split}
\|N\|_{L^2(\Omega)}^2 
&\le \|N^0\|_{L^2(\Omega)}^2 + \frac{1-\alpha}{M(\alpha)} (\beta - \xi) \|N\|_{L^2(\Omega)}^2 + \frac{\alpha}{M(\alpha)} (\beta - \xi) \int_0^t \|N(s)\|_{L^2(\Omega)}^2 \,ds\\
&\le \|N^0\|_{L^2(\Omega)}^2 + \frac{\alpha(\beta - \xi)}{M(\alpha)} \int_0^t \|N(s)\|_{L^2(\Omega)}^2 \,ds.
\end{split}
\end{equation*}
By Gronwall inequality, we obtain
\begin{equation*}
    \|N(t)\|_{L^2(\Omega)}^2 \leq \|N^0\|_{L^2(\Omega)}^2 \exp\Bigl[ \frac{\alpha(\beta - \xi)}{M(\alpha)} t\Bigr].
\end{equation*}
That is to say
\begin{equation*}
     \omega \in L^\infty\bigl(0, T; L(\Omega)\bigr).
\end{equation*}
\end{proof}

\medskip
The non-negativity of the unique solution is given by the following theorem.
\begin{theorem}\label{T2}
The solution of \eqref{E3.6} is positive if
\begin{equation*}
\theta := 1 - \frac{2(1-\alpha)}{M(\alpha)}\|N\|_{L^\infty(0, T; L^2(\Omega))} > 0.
\end{equation*}
\end{theorem}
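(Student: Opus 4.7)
The plan is to prove positivity by showing that the negative parts of each component of $\omega$ vanish identically. For each $i = 1, \dots, 4$, set $\omega_i^- := \max(-\omega_i, 0)$, so that $\omega_i \cdot \omega_i^- = -(\omega_i^-)^2$, and define the quantity
\begin{equation*}
V(t) := \sum_{i=1}^{4} \|\omega_i^-(t, \cdot)\|_{L^2(\Omega)}^2.
\end{equation*}
The positivity of the initial data~\eqref{E3.3} gives $V(0)=0$, so it suffices to show $V(t)\le 0$ on $[0,T]$ under the hypothesis $\theta > 0$.

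First, I would test each of the four equations in \eqref{E3.1} against $\omega_i^-$ and integrate over $\Omega$. The no-flux boundary condition~\eqref{E3.2} together with integration by parts turns the diffusion term into $\lambda_i\langle\nabla\omega_i,\nabla\omega_i^-\rangle = -\lambda_i\|\nabla\omega_i^-\|_{L^2}^2 \le 0$, which can be discarded. For the time-derivative term I would use Corollary~\ref{C1}, applied to $\omega_i^-$, to obtain
\begin{equation*}
\bigl({}^{\CFC}\Dat \omega_i,\omega_i^-\bigr)_{L^2(\Omega)} \le -\tfrac{1}{2}\,{}^{\CFC}\Dat\|\omega_i^-\|_{L^2(\Omega)}^2,
\end{equation*}
after justifying that the sign-splitting $\omega_i = \omega_i^+ - \omega_i^-$ is compatible with the $\CFC$ derivative on the relevant regularity class.

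Next, I would deal with the reaction terms $\Phi_i(\omega)\cdot\omega_i^-$. The linear dissipative contributions (e.g.\ $-(\xi+u)S\cdot S^-$, $-(\xi+\kappa)E\cdot E^-$, etc.) produce favorable signs. For the coupling terms $\mu S I\cdot S^-$, $\mu S I\cdot E^-$, $\kappa E\cdot I^-$, $\eta I\cdot R^-$, $uS\cdot R^-$, and the source $\beta N\cdot S^-$, I would bound each of them in absolute value by $\|N\|_{L^\infty(0,T;L^2(\Omega))}$ times a quadratic expression in the $\omega_i^-$'s, using $|\omega_i|\le N$ pointwise together with Cauchy–Schwarz. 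Summing the four tested equations then yields a differential inequality of the form
\begin{equation*}
\tfrac{1}{2}\,{}^{\CFC}\Dat V(t) \;\le\; c\,\|N\|_{L^\infty(0,T;L^2(\Omega))}\,V(t),
\end{equation*}
where the constant $c$ depends only on the parameters $\beta,\mu,\kappa,\eta$ and will be tracked so as to produce the factor $2(1-\alpha)/M(\alpha)$ in $\theta$ after the next step.

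Finally, I would apply the Caputo–Fabrizio integral $\,{}^{\CF}I^\alpha$ (Lemma~\ref{L1}) to both sides. Using $V(0)=0$ and Definition~\ref{D2}, the resulting identity is
\begin{equation*}
\tfrac{1}{2}V(t) \;\le\; c\,\|N\|_{L^\infty}\left[\tfrac{1-\alpha}{M(\alpha)}V(t) + \tfrac{\alpha}{M(\alpha)}\int_0^t V(s)\,ds\right].
\end{equation*}
Rearranging and invoking the assumption $\theta = 1-\tfrac{2(1-\alpha)}{M(\alpha)}\|N\|_{L^\infty}>0$ lets me absorb the $V(t)$ on the right-hand side, leaving a Volterra inequality $V(t)\le C_\theta \int_0^t V(s)\,ds$ to which Gronwall's lemma gives $V\equiv 0$.

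The main obstacle I anticipate is two-fold: (i) rigorously justifying the Corollary~\ref{C1} inequality for the negative part $\omega_i^-$, which need not be as smooth in time as $\omega_i$ and requires either a regularization argument or a direct verification from the kernel representation of the $\CFC$ derivative; and (ii) the careful bookkeeping of the cross-term estimates so that the constant $c\,\|N\|_{L^\infty}$ sharpens exactly to $\tfrac{1}{2}\cdot\tfrac{2(1-\alpha)}{M(\alpha)}\|N\|_{L^\infty}$ after applying the $\CF$ integral, producing precisely the threshold $\theta$ stated in the theorem.
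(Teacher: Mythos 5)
Your toolkit is the same as the paper's --- test each equation against the negative part $\omega_i^-$, control the time derivative via Corollary~\ref{C1}, discard the diffusion and dissipative terms, apply the $\CF$ integral of Lemma~\ref{L1}, absorb the non-integral term using $\theta>0$, and conclude with Gronwall --- and you are right to flag the regularity issue in applying Corollary~\ref{C1} to $\omega_i^-$, which the paper passes over silently. The genuine gap is the decision to lump all four components into $V(t)=\sum_i\|\omega_i^-\|_{L^2(\Omega)}^2$ and close a single inequality ${}^{\CFC}\Dat V\lesssim \|N\|_{L^\infty(0,T;L^2(\Omega))}\,V$. This cannot be done as described because of the incidence term $\mu SI$. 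Tested against $S^-$ and $E^-$ it produces cubic integrals such as $\mu\int_\Omega (S^-)^2\,|I|\,dx$ and $\mu\int_\Omega |S|\,|I|\,E^-\,dx$; Cauchy--Schwarz turns these into quantities like $\|S^-\|_{L^4(\Omega)}^2\|I\|_{L^2(\Omega)}$ or $\|SI\|_{L^2(\Omega)}\|E^-\|_{L^2(\Omega)}$, neither of which is controlled by $\|N\|_{L^\infty(0,T;L^2(\Omega))}\cdot V(t)$. The pointwise bound $|\omega_i|\le N$ that you invoke to linearize them is precisely what is unavailable at this stage: it presupposes that the remaining components are nonnegative, so using it inside the positivity proof is circular.

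The paper's way out --- and the idea your proposal is missing --- is to order the components rather than sum them. The $I$-equation is treated first because its only coupling, $\kappa E$, is linear, so $\kappa(\omega_2,\omega_3^-)\le\kappa\|\omega_2\|_{L^2(\Omega)}\|\omega_3^-\|_{L^2(\Omega)}$ suffices and yields $\omega_3^-\equiv0$ after the $\CF$ integral, the absorption via $\theta$, and Gronwall. Once $I\ge0$ is known, the nonlinear term in the $S$-equation tested against $S^-$ becomes sign-definite and is simply dropped rather than estimated; once $S,I\ge0$, the sources $\mu SI$, $uS+\eta I$ in the $E$- and $R$-equations are likewise sign-definite. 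This sequential structure is also what produces the exact threshold $\theta=1-\tfrac{2(1-\alpha)}{M(\alpha)}\|N\|_{L^\infty(0,T;L^2(\Omega))}$: in each scalar step only one coupling coefficient ($\kappa$ or $\beta$, each $<1$) appears and is absorbed into $\|N\|$, whereas a lumped estimate would accumulate a constant depending on $\beta,\mu,\kappa,\eta,u$ and would not reproduce the stated $\theta$. Note also that the source terms are of degree one in the negative parts (e.g.\ $\beta\|N\|\,\|S^-\|$), so the honest lumped inequality would read ${}^{\CFC}\Dat V\lesssim\|N\|\sqrt{V}$, to which Gronwall does not apply to force $V\equiv0$ from $V(0)=0$; the paper circumvents this by passing to a linear integral inequality for $\|\omega_i^-\|_{L^\infty(0,T;L^2(\Omega))}$ itself before invoking Gronwall.
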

\begin{proof}
Let $\omega$ be the solution of \eqref{E3.6}. 
According to Theorem~\ref{T1}, $\omega\in L^\infty\bigl(0, T; L(\Omega)\bigr)$.

First, let us show that $\omega_3 \ge0$. 
We introduce the partitioning $\omega_3 = \omega_3^+ + \omega_3^-$, where $\omega_3^+ = \max(\omega_3, 0)$ and $\omega_3^- = \max(-\omega_3, 0)$
and multiply the third equation of \eqref{E3.6} by $\omega_3^-$
and get
\begin{equation*}
   \bigl({ }^{\CFC} \Dat\omega_3, \omega_3^- \bigr)_{L^2(\Omega)} - \lambda_3\bigl(\Delta\omega_3, \omega_3^- \bigr)_{L^2(\Omega)} 
   = \kappa \bigl(\omega_2, \omega_3^- \bigr)_{L^2(\Omega)} 
   - (\xi + \eta)\bigl(\omega_3, \omega_3^- \bigr)_{L^2(\Omega)}.
\end{equation*}
According to the Cauchy–Schwarz inequality and \eqref{E2.6}, we have
\begin{equation*}
    \frac{1}{2}{ }^{\CFC} \Dat \|\omega_3^- \|_{L^2(\Omega)}^2 + \lambda_3\|\nabla\omega_3^- \|_{L^2(\Omega)}^2 
    \le\kappa \|\omega_2 \|_{L^2(\Omega)} \|\omega_3^- \|_{L^2(\Omega)} - (\xi + \eta)\|\omega_3^- \|_{L^2(\Omega)}^2.
\end{equation*}
Then,
\begin{equation*}
   \frac{1}{2}{ }^{\CFC} \Dat \|\omega_3^- \|_{L^2(\Omega)}^2 
   \le \kappa \|\omega_2 \|_{L^2(\Omega)} \|\omega_3^- \|_{L^2(\Omega)}.
\end{equation*}
By applying \eqref{E2.4}, we obtain
\begin{equation*}
\begin{split}
\|\omega_3^- \|_{L^2(\Omega)}^2 
&\le\|\omega_3^{0 -} \|_{L^2(\Omega)}^2 + 2\kappa\frac{1-\alpha}{M(\alpha)} \|\omega_2 \|_{L^2(\Omega)} \|\omega_3^- \|_{L^2(\Omega)} + \frac{2\kappa\alpha}{M(\alpha)} \int_0^t \|\omega_2(s) \|_{L^2(\Omega)} \|\omega_3^-(s) \|_{L^2(\Omega)} \,ds\\
&\le\|\omega_3^{0 -} \|_{L^2(\Omega)}^2 + \frac{2(1-\alpha)}{M(\alpha)} \|N\|_{L^2(\Omega)} \|\omega_3^- \|_{L^2(\Omega)} + \frac{2\kappa\alpha}{M(\alpha)} \int_0^t \|\omega_2(s) \|_{L^2(\Omega)} \|\omega_3^-(s) \|_{L^2(\Omega)} \,ds.
\end{split}
\end{equation*}
Since $\sqrt{a+b}\le\sqrt{a} + \sqrt{b}$, we have
\begin{equation*}
   \|\omega_3^- \|_{L^\infty(0, T; L^2(\Omega))} \leq \frac{1}{\theta}\|\omega_3^{0 -} \|_{L^2(\Omega)} + \frac{2\kappa\alpha}{M(\alpha)\theta} \int_0^t \|\omega_2(s)\|_{L^\infty(0, T; L^2(\Omega))} \|\omega_3^-(s)\|_{L^\infty(0, T; L^2(\Omega))} \,ds.
\end{equation*}
Applying the Gronwall inequality, we get
\begin{equation*}
  \|\omega_3^- \|_{L^\infty(0, T; L^2(\Omega))} 
  \le\frac{1}{\theta}\|\omega_3^{0 -}\|_{L^2(\Omega)} \exp\biggl[\frac{2\kappa\alpha}{M(\alpha)\theta}
  \int_0^t \|\omega_2(s)\|_{L^\infty(0, T; L^2(\Omega))} \,ds\biggr].
\end{equation*}
Afterwards, $\omega_3^- = 0$.

Next, we will show that $\omega_1 \ge0$. To do so, we multiply the first equation of \eqref{E3.6} by $\omega_1^-$ and obtain
\begin{equation*}
   \bigl({ }^{\CFC} \Dat \omega_1, \omega_1^- \bigr)_{L^2(\Omega)} - \lambda_1\bigl(\Delta\omega_1, \omega_1^- \bigr)_{L^2(\Omega)} 
= \beta \bigl(N, \omega_1^- \bigr)_{L^2(\Omega)} - \mu\bigl(\omega_1\omega_3, \omega_1^- \bigr)_{L^2(\Omega)} - (\xi + u)\bigl(\omega_1, \omega_1^- \bigr)_{L^2(\Omega)}.
\end{equation*}
Since $\omega_3 \ge0$, by the Cauchy–Schwarz inequality and \eqref{E2.6}, we get
\begin{equation*}
{ }^{\CFC} \Dat \|\omega_1^- \|_{L^2(\Omega)}^2 \leq 2\beta \|N\|_{L^2(\Omega)} \|\omega_1^- \|_{L^2(\Omega)}.
\end{equation*}
Applying \eqref{E2.4}, we obtain
\begin{equation*}
\|\omega_1^-\|_{L^\infty(0, T; L^2(\Omega))} 
\le\frac{1}{\theta}\|\omega_1^{0 -} \|_{L^2(\Omega)} + \frac{2\beta\alpha}{M(\alpha)\theta} 
\int_0^t \|N(s)\|_{L^\infty(0, T; L^2(\Omega))} \|\omega_1^-(s)\|_{L^\infty(0, T; L^2(\Omega))} \,ds.
\end{equation*}
According to the Gronwall inequality, we find
\begin{equation*}
    \|\omega_1^-\|_{L^\infty(0, T; L^2(\Omega))} 
    \le\frac{1}{\theta}\|\omega_1^{0 -}\|_{L^2(\Omega)}
    \exp\biggl[\frac{2\beta\alpha}{M(\alpha)\theta} \int_0^t \|N(s)\|_{L^\infty(0, T; L^2(\Omega))} \,ds\biggr].
\end{equation*}
Then, it follows $\omega_1^- = 0$.

We note that the same methodology gives $\omega_2 \ge0$ and $\omega_4 \ge0$.
\end{proof}

\section{Existence of an optimal solution}\label{S5}
To prove the existence of an optimal control, we use the technique of minimizing sequences.
The proof relies on several lemmas, which we introduce below.

\begin{lemma}[{\cite[Page 36]{Atanackovi2018}}]\label{L4}
Let $\nu, \varphi \in C^{\infty}(\Omega_T)$. Then
\begin{equation*}
\begin{split}
\int_0^T \bigl({ }^{\CFC} \Dat \nu\bigr)\,\varphi \,dt 
&= \int_0^T \nu \,\bigl({ }^{\CFC}_{T} \Dat \varphi\bigr)  \,dt\\
&\qquad+ \frac{1}{1-\alpha} \,\varphi(T, x) 
  \int_0^T \nu(t) \,e^{-\gamma (T-t)} \,dt 
   - \frac{1}{1-\alpha} \,\nu(0, x) \int_0^T \varphi(t) \,e^{-\gamma t} \,dt.
\end{split}
\end{equation*}
\end{lemma}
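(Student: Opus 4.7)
The approach is a direct two-sided integration by parts. I would unfold both sides via Definition~\ref{D1}, use Fubini to interchange the order of integration in each resulting double integral, and then move the classical derivative off $\nu$ on the left and off $\varphi$ on the right. The non-boundary contributions produced on the two sides should coincide after a relabeling of variables, so that only the two explicit boundary terms in the statement survive.

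For the left-hand side, expanding ${}^{\CFC}\Dat\nu$ via Definition~\ref{D1}a and applying Fubini on the triangle $\{0\le y\le t\le T\}$ gives
\[
\int_0^T \bigl({}^{\CFC}\Dat\nu\bigr)\varphi\,dt
=\frac{M(\alpha)}{1-\alpha}\int_0^T \nu'(y)\,F(y)\,dy,
\qquad F(y):=\int_y^T e^{-\gamma(t-y)}\,\varphi(t)\,dt.
\]
A classical IBP in $y$ produces the boundary piece $[\nu(y)F(y)]_0^T$: the endpoint $y=T$ vanishes because $F(T)=0$, while the endpoint $y=0$ yields $-\nu(0,x)\int_0^T e^{-\gamma t}\varphi(t)\,dt$, which (up to the prefactor) is exactly the second boundary term in the lemma. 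Using $F'(y)=-\varphi(y)+\gamma F(y)$, the residual volume integral splits into a diagonal term $\int_0^T \nu\varphi\,dy$ and a $\gamma$-weighted double integral $-\gamma\int_0^T \nu(y)F(y)\,dy$.

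Running the same procedure on the right-hand side via Definition~\ref{D1}b and Fubini on $\{0\le t\le y\le T\}$, followed by an IBP in $y$ in the inner integral (using $\partial_y e^{-\gamma(y-t)}=-\gamma e^{-\gamma(y-t)}$), isolates a boundary term at $y=T$ equal to $\varphi(T,x)\int_0^T \nu(t)e^{-\gamma(T-t)}\,dt$, which is the first boundary term of the lemma, together with a diagonal contribution $\int_0^T \nu\varphi\,dt$ and a further $\gamma$-weighted double integral. One last use of Fubini identifies this remaining double integral with the $\int_0^T \nu(y)F(y)\,dy$ obtained from the left-hand side, so when the two processed expressions are subtracted the diagonal terms and the $\gamma$-weighted remainders cancel in pairs, leaving exactly the two boundary contributions asserted by the lemma.

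The principal difficulty is notational rather than analytic: the forward kernel $e^{-\gamma(t-y)}$ and the backward kernel $e^{-\gamma(y-t)}$ have opposite orientations, so the derivatives that appear under the two IBPs carry opposite-signed factors of $\gamma$, and it is easy to drop a sign if the two sides are not processed in parallel. The smoothness hypothesis $\nu,\varphi\in C^\infty(\Omega_T)$ is more than enough to justify every Fubini interchange and every classical IBP invoked along the way.
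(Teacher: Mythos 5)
Your argument is correct, but note that the paper itself offers no proof of Lemma~\ref{L4}: it is quoted verbatim from the cited reference, so there is nothing in the text to compare against and your derivation is a genuine addition. The route you take --- unfold both sides via Definition~\ref{D1}, Fubini on the triangles $\{0\le y\le t\le T\}$ and $\{0\le t\le y\le T\}$, then a classical integration by parts in the outer variable --- is the standard one, and all the individual steps check out: $F(T)=0$ and $G(0)=0$ kill one endpoint on each side, $F'=\gamma F-\varphi$ and $G'=\nu-\gamma G$ produce matching diagonal terms, and the two $\gamma$-weighted double integrals are both equal to $\iint_{0\le s\le t\le T}\nu(s)\varphi(t)e^{-\gamma(t-s)}\,ds\,dt$, so everything except the two endpoint contributions cancels. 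The one issue you should not bury under the phrase ``up to the prefactor'': your computation yields the boundary terms with coefficient $\tfrac{M(\alpha)}{1-\alpha}$, whereas the lemma as printed has $\tfrac{1}{1-\alpha}$. These agree only under the frequent convention $M(\alpha)\equiv 1$, which this paper does not impose (it only assumes $M(0)=M(1)=1$). So your proof in fact establishes the identity
\begin{equation*}
\int_0^T \bigl({ }^{\CFC} \Dat \nu\bigr)\varphi \,dt
= \int_0^T \nu \bigl({ }^{\CFC}_{T} \Dat \varphi\bigr)\,dt
+ \frac{M(\alpha)}{1-\alpha}\,\varphi(T,x)\int_0^T \nu(t)\,e^{-\gamma(T-t)}\,dt
- \frac{M(\alpha)}{1-\alpha}\,\nu(0,x)\int_0^T \varphi(t)\,e^{-\gamma t}\,dt,
\end{equation*}
which is a (minor, but real) correction to the statement rather than a verification of it; you should say so explicitly rather than silently absorbing $M(\alpha)$.
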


\begin{lemma}[{\cite[Page 7]{SidiAmmi2022}}]\label{L5}
Let $\tilde{y} \in L^{\infty}\bigl(0, T, L^2(\Omega)\bigr) \cap H^1\bigl(0, T, L^1(\Omega)\bigr)$, then there exists a positive
constant $k$ such that
\begin{equation*}
     \bigl\|\partial_t \tilde{y}\bigr\|_{L^1(0, T, L^1(\Omega))} 
      \le\frac{k}{E_{\alpha}(-\gamma T^{\alpha})}
          \bigl\|\tilde{y}\bigr\|_{L^{\infty}(0, T, L^2(\Omega))}.
\end{equation*}
\end{lemma}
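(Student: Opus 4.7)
The plan is to bound $\|\partial_t\tilde{y}\|_{L^1(0,T;L^1(\Omega))}$ by combining the bounded-domain embedding $L^2(\Omega)\hookrightarrow L^1(\Omega)$ with a Volterra-type inversion of the convolution that defines the $\CFC$ derivative in \eqref{E2.1}, and then closing the estimate through a fractional Grönwall inequality whose sharp multiplier is expressed via the Mittag-Leffler function $E_\alpha(-\gamma T^\alpha)$.

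First, I would use the elementary inequality $\|v\|_{L^1(\Omega)}\le|\Omega|^{1/2}\|v\|_{L^2(\Omega)}$, valid on the bounded domain $\Omega$, to transfer the assumed $L^{\infty}(0,T;L^2(\Omega))$-control on $\tilde{y}$ into the $L^1$ scale where $\partial_t\tilde{y}$ is measured. Next, I would regard the convolution identity
\[
\frac{M(\alpha)}{1-\alpha}\int_0^t\partial_s\tilde{y}(s)\,e^{-\gamma(t-s)}\,ds={}^{\CFC}\Dat\tilde{y}(t),
\]
as a Volterra equation of the second kind for $\partial_t\tilde{y}$, and invert the exponential kernel so as to express $\partial_s\tilde{y}$ pointwise in terms of ${}^{\CFC}\Dat\tilde{y}$; by Lemma~\ref{L1} the latter is itself recovered from $\tilde{y}-\tilde{y}(0)$ through the $\CF$ fractional integral. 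Integrating in $t$, applying Fubini on $(0,T)\times\Omega$, and invoking the $L^2\to L^1$ embedding in space, one obtains a linear integral inequality for $\|\partial_t\tilde{y}\|_{L^1(0,T;L^1(\Omega))}$ driven by $\|\tilde{y}\|_{L^\infty(0,T;L^2(\Omega))}$. A Henry-type fractional Grönwall inequality then closes the estimate, and standard computations show that the sharp multiplier appearing in such bounds, in the CF normalization $\gamma=\alpha/(1-\alpha)$, is exactly $1/E_\alpha(-\gamma T^\alpha)$; collecting the remaining constants into a single positive $k=k(\alpha,T,|\Omega|)$ yields the claimed inequality.

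The main obstacle I foresee is the Volterra inversion step: one must show that the resolvent kernel of the exponential convolution, produced through its Neumann series, is integrable in time uniformly in the spatial variable, so that the constant in front of the Grönwall application is finite and depends only on $\alpha$, $T$, and $|\Omega|$. A secondary technical point, standard but essential here, is to justify Fubini's theorem and the exchange of $\partial_t$ with the spatial integrals; the hypothesis $\tilde{y}\in H^1(0,T;L^1(\Omega))$ is used precisely to ensure that $\partial_t\tilde{y}$ is a genuine Bochner-integrable function in time and that the double integrals arising throughout the chain of estimates are well-defined.
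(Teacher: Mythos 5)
The paper itself gives no proof of Lemma~\ref{L5}: it is quoted verbatim from \cite[Page 7]{SidiAmmi2022}, so there is no in-paper argument to compare against. Judged on its own terms, your proposal has a gap that cannot be repaired. The identity
\[
{}^{\CFC}\Dat\tilde{y}(t)=\frac{M(\alpha)}{1-\alpha}\int_0^t \partial_s\tilde{y}(s)\,e^{-\gamma(t-s)}\,ds
\]
is a Volterra equation of the \emph{first} kind in the unknown $\partial_t\tilde{y}$ (the unknown appears only under the integral sign), not of the second kind as you state. First-kind equations with bounded kernels have no Neumann-series resolvent; inverting this one costs a time derivative of the data (formally $\partial_t\tilde{y}=\frac{1-\alpha}{M(\alpha)}\bigl(\partial_t({}^{\CFC}\Dat\tilde{y})+\gamma\,{}^{\CFC}\Dat\tilde{y}\bigr)$), so a bound on ${}^{\CFC}\Dat\tilde{y}$ in terms of $\|\tilde{y}\|_{L^\infty(0,T;L^2(\Omega))}$ --- which is the most your Lemma~\ref{L1}/resolvent step can deliver --- does not control $\|\partial_t\tilde{y}\|_{L^1(0,T;L^1(\Omega))}$. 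The Grönwall step therefore never closes.

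In fact no argument using only the stated hypotheses can succeed, because the inequality is false as a bare functional estimate on $L^{\infty}(0,T;L^2(\Omega))\cap H^1(0,T;L^1(\Omega))$: take $\tilde{y}_n(t,x)=n^{-1}\bigl(1-\cos(nt)\bigr)\phi(x)$ with $0\not\equiv\phi\in C_c^\infty(\Omega)$. Then $\|\tilde{y}_n\|_{L^\infty(0,T;L^2(\Omega))}\le 2n^{-1}\|\phi\|_{L^2(\Omega)}\to 0$ while $\|\partial_t\tilde{y}_n\|_{L^1(0,T;L^1(\Omega))}=\|\phi\|_{L^1(\Omega)}\int_0^T|\sin(nt)|\,dt\to \tfrac{2T}{\pi}\|\phi\|_{L^1(\Omega)}>0$, so no constant $k$ works uniformly. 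The lemma is meaningful only as an a priori estimate for $\tilde{y}$ solving the underlying time-fractional evolution equation --- which is how it is used in Section~\ref{S5} for the states $\omega_i^n$ and how it is derived in the cited source, namely by expressing $\partial_t\tilde{y}$ through the equation itself. Any correct proof must invoke that equation; a purely function-space argument of the kind you outline cannot work.
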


\medskip
According to these two lemmas, we have the following theorem.
\begin{theorem}\label{T3}
The problem~\eqref{E3.1}--\eqref{E3.3} admits an optimal solution $\omega^*(u^*)\in L^\infty(\omega_T)$ which minimizes \eqref{E3.4}.
\end{theorem}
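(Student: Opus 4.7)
The plan is to use the classical method of minimizing sequences, combined with the a priori bounds from Theorem~\ref{T1} and the compactness information carried by Lemma~\ref{L5}. Set $d := \inf_{u \in U_{ad}} \cJ(\omega(u), u)$; since $\cJ \ge 0$, we have $0 \le d < \infty$. Pick a minimizing sequence $(u^n) \subset U_{ad}$ and let $\omega^n = \omega(u^n)$ denote the corresponding state, which exists and is unique by Theorem~\ref{T1}, so that $\cJ(\omega^n, u^n) \to d$.

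First, because $\|u^n\|_{L^\infty(\Omega_T)} < 1$, the Banach--Alaoglu theorem supplies a subsequence (still labeled $u^n$) and some $u^* \in L^\infty(\Omega_T)$ with $u^n \rightharpoonup^* u^*$, and the admissibility constraints $u^n \ge 0$, $\|u^n\|_{L^\infty} < 1$ pass to the limit so that $u^* \in \overline{U_{ad}}$. Simultaneously Theorem~\ref{T1} gives uniform bounds on $\omega^n$ in $L^2(0,T;\cH(\Omega)) \cap L^\infty(0,T;L(\Omega))$, hence along a further subsequence $\omega^n \rightharpoonup \omega^*$ weakly in $L^2(0,T;\cH(\Omega))$ and weakly-$\ast$ in $L^\infty(0,T;L(\Omega))$. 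Applying Lemma~\ref{L5} to each component produces a uniform bound on $\partial_t \omega^n$ in $L^1(0,T;L^1(\Omega))$; an Aubin--Lions type compactness argument then yields strong convergence $\omega^n \to \omega^*$ in $L^2(0,T;L(\Omega))$ and, along a further subsequence, pointwise a.e.\ convergence.

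The main obstacle, as usual, is passing to the limit in the nonlinear terms $\mu \omega^n_1 \omega^n_3$ and in the bilinear control term $u^n \omega^n_1$. For the state--state product I would combine strong $L^2$ convergence of $\omega^n_3$ with the uniform $L^\infty(0,T;L(\Omega))$ bound on $\omega^n_1$ to conclude $\omega^n_1 \omega^n_3 \to \omega^*_1 \omega^*_3$ in $L^1$. For the mixed term $u^n \omega^n_1$ the strong convergence of $\omega^n_1$ against the weak-$\ast$ convergence of $u^n$ identifies the product with $u^* \omega^*_1$ in the sense of distributions. To handle the fractional time derivative in the weak formulation, I would use Lemma~\ref{L4} to transfer ${}^{\CFC}\Dat$ onto smooth test functions, after which all boundary terms involve only $\omega^n(0)=\omega^0$ and integrals of $\omega^n$ against bounded exponential weights, both of which are stable under the convergences already established. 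Combining everything allows to identify the limit as $\omega^* = \omega(u^*)$.

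Finally, since the integrands $I^2(T,x)$, $I^2(t,x)$ and $\sigma u^2(t,x)$ are all convex and nonnegative, the functional $\cJ$ is sequentially weakly (respectively weakly-$\ast$) lower semicontinuous, so
\begin{equation*}
     \cJ(\omega^*, u^*) \le \liminf_{n \to \infty} \cJ(\omega^n, u^n) = d,
\end{equation*}
and the reverse inequality is automatic, giving optimality. The $L^\infty(\Omega_T)$ regularity claim for $\omega^*$ follows from Theorem~\ref{T1} together with the positivity of Theorem~\ref{T2} and the Gronwall bound on $N$ obtained at the end of the proof of Theorem~\ref{T1}, which controls each component pointwise. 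The most delicate step, on which everything hinges, is the Aubin--Lions compactness in the fractional setting, because the usual $L^p(0,T;X)$ time regularity is replaced by the $\CFC$-weighted estimate of Lemma~\ref{L5}; care will be needed to verify that this integrability suffices to trigger compact embedding into $L^2(0,T;L(\Omega))$.
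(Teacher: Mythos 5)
Your proposal is correct and follows essentially the same route as the paper: a minimizing sequence, the a priori bounds of Theorem~\ref{T1}, the $L^1$ time-derivative bound of Lemma~\ref{L5} feeding an Aubin--Lions compactness argument, Lemma~\ref{L4} to pass the $\CFC$ derivative onto test functions, and the strong-times-weak decomposition for the products $\omega_1\omega_3$ and $u\,\omega_1$. The only structural difference is that the paper additionally runs a Minty-type monotonicity argument to identify the weak limit of $\cL\omega^n$ with $\cL\omega^*$ (which your weak convergence in $L^2(0,T;\cH(\Omega))$ already gives for this linear operator), while you make explicit the weak lower semicontinuity of $\cJ$, which the paper leaves implicit.
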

\begin{proof}
Let $\bigl((\omega^n, u^n)\bigr)_n$ such as
\begin{equation*}
   \cJ(\omega^*,u^*) 
  = \lim_{n\to\infty} \cJ(\omega^n, u^n) 
  = \inf_{u\in U_{ad}}\bigl\{\cJ(\omega, u)\bigr\} ,
\end{equation*}
where $\omega^n = (\omega_i^n)_{i=1,2,3,4}$ and $u^n\in U_{ad}$.
Let $i\in\{1,2,3\}$, the couple $(\omega_i^n, u^n)$ satisfying the system
\begin{equation*}
\begin{cases}
{ }^{\CFC} \Dat\omega_i^n + \cL\omega_i^n = \Phi_i(\omega^n), &\text{ in } \Omega_T,\\ \vspace{0.2cm}
\nabla \omega_i^n\cdot\vec{n} = 0,  &\text{ on }  \partial\Omega_T,\\
\omega_i^n(0) = \omega_i^0,  &\text{ in }  \Omega.
\end{cases}
\end{equation*}
By the boundedness of $\omega_i^n$ ($|\omega_i^n| \le N$) and Theorem~\ref{T1}, the sequence $(\omega_i^n)$ is bounded in $L^\infty(0, T; L^2(\Omega))$ 
and in $L^2(0, T; \cH(\Omega))$. 
The second member $\Phi_i(\omega^n)$ is also bounded in $L^\infty(\Omega_T)$. 
So there is a posi\-tive constant $c$ such as
\begin{equation*}
  \bigl\| { }^{\CFC} \Dat\omega_i^n - \cL\omega_i^n \bigr\|_{L^2(\Omega_T)} \le c.
\end{equation*}
Then there is a subsequence of ($\omega^n$) denoted again by ($\omega^n$) such that
\begin{equation*}
\Bigg| \ \begin{aligned}
& { }^{\CFC} \Dat\omega_i^n - \cL\omega_i^n \rightharpoonup  \phi \text { weakly in } L^2(\Omega_T), \\
& \omega^n \rightharpoonup  \omega^* \text { weakly in } L^2\bigl(0, T ; \cH(\Omega)\bigr).
\end{aligned}
\end{equation*}
Set
\begin{equation*}
     \mathcal{K} = \Bigl\{w\in L^2\bigl(0, T ; \cH(\Omega)\bigr) / \partial_t w\in L^1\bigl(0, T; (L^1(\Omega))^4\bigr) \Bigr\}.
\end{equation*}
Since $\cH(\Omega)$ is compactly embedded in $L^2(\Omega)$, we conclude that $(\omega_i^n)$ is compact in $L(\Omega)$. 
By Lemma~\ref{L5} we get that $(\partial_t \omega_i^n)$ is bounded in $L^1(0, T; L^1(\omega))$. 
By the classical argument of Aubin \cite[Page 65]{Moussa2015}, we get the compactly embedded space $\mathcal{K}$ in $L(\Omega_T)$.
Then there again exists a subsequence of ($\omega^n$) denoted by ($\omega^n$) such that
\begin{equation*}
\begin{split}
& \omega^n \rightharpoonup  \omega^* \text { weakly in } L(\Omega_T) \text { and in } L^\infty\bigl(0, T ; L(\Omega)\bigr), \\
& \omega^n \longrightarrow  \omega^* \text { strongly in } L(\Omega_T), \\
& \omega^n \longrightarrow  \omega^* \text { a.e. in } L(\Omega_T),\\
& \omega^n(T) \longrightarrow  \omega^*(T) \text { in } L(\Omega_T).
\end{split}
\end{equation*}
Note that the space $D^\prime(\Omega_T)$ is the dual of $C_0^\infty(\Omega_T)$. If we let $\varphi\in C_0^\infty(\Omega_T)$, we get
\begin{equation*}
   \int_0^T \int_{\Omega} \omega_i^n \ ({ }^{\CFC}_{T} \Dat \varphi) \,dx dt 
    \longrightarrow \int_0^T \int_{\Omega} \omega_i^* \ ({ }^{\CFC}_{T} \Dat \varphi) \,dx dt,
\end{equation*}
and
\begin{equation*}
    \int_{\Omega} \varphi(T, x) \int_0^T \omega_i^n \,e^{-\gamma (T-t)} \,dt dx 
    \longrightarrow \int_{\Omega} \varphi(T, x) \int_0^T \omega_i^* \,e^{-\gamma (T-t)} \,dt dx.
\end{equation*}
By Lemma~\ref{L4}, we find
\begin{equation*} 
     { }^{\CFC} \Dat\omega_i^n \rightharpoonup { }^{\CFC} \Dat\omega_i^* \text{ weakly in } D^\prime(\Omega_T).
\end{equation*}
Writing $\omega_1^n \omega_2^n - \omega_1^* \omega_2^* = (\omega_1^n-\omega_1^*) \omega_2^n + \omega_1^*(\omega_2^n - \omega_2^*)$, 
using the convergence $\omega_i^n \longrightarrow \omega_i^*$ in $L^2(\Omega_T)$, 
and the boundedness of $(\omega_1^n), (\omega_2^n)$ in $L^{\infty}(\Omega_T)$, you get $\omega_1^n \omega_2^n \longrightarrow \omega_1^* \omega_2^*$ in $L^2(\Omega_T)$. 
We also have $u^n\to u^*$ in $L^2(\Omega_T)$ on a subsequence of $(u^n)$ denoted again by $(u^n)$. 
Using the closeness and convexity of $U_{ad}$ in $L^2(\Omega_T)$, we get that $U_{ad}$ is weakly closed. 
Then $u^*\in U_{ad}$ and as above $u^n \omega_1^n \longrightarrow u^* \omega_1^*$ in $L^2(\Omega_T)$. 
We also have $\cL\omega^n \rightharpoonup \chi $ weakly in $D^\prime(\Omega_T)$.

It remains to show that $\cL\omega^* = \chi$. Since $\cL$ is monotone, then
\begin{equation*}
    X_n = \langle \cL\omega^n - \cL v, \omega^n - v \rangle \ge 0,
    \quad \forall v\in D(\cL).
\end{equation*}
Recall that
\begin{equation*}
    \langle \cL\omega^n , \omega^n \rangle 
     = \langle { }^{\CFC} \Dat\omega^n - \Phi(\omega^n) , \omega^n \rangle.
\end{equation*}
Hence,
\begin{equation*}
  0 \le \langle\chi , \omega^*\rangle  
  - \langle\chi , v\rangle  
  - \langle\cL v , \omega^* - v\rangle 
  = \langle\chi - \cL v  , \omega^* - v\rangle .
\end{equation*}
Let $\delta>0$, we put $v = \omega^* - \delta h \in D(\cL)$. Then,
\begin{equation*}
       \delta \langle\chi - \cL(\omega^* - \delta h) , h\rangle \ge0.
\end{equation*}
Afterwards,
\begin{equation*}
   \langle\chi - \cL(\omega^* - \delta h),h\rangle \ge0.
\end{equation*}
Obviously, for $\delta \to 0$ we get
\begin{equation*}
    \langle\chi - \cL(\omega^*),h\rangle \ge0, \quad \forall h \in D(\cL).
\end{equation*}
Subsequently $\chi = \cL\omega^*$. By the uniqueness of the limit, we obtain that
\begin{equation*}
    \phi = { }^{\CFC} \Dat\omega^* - \cL\omega^*.
\end{equation*}
Now we may pass to the limit in the system satisfied by $\omega^n$ as $n \to \infty$, we find that an optimal solution of \eqref{E3.1}--\eqref{E3.5} is $(\omega^*, u^*)$.
\end{proof}

\section{Necessary optimality conditions}\label{S6}
Let $\omega^{\varepsilon}=(\omega_i^{\varepsilon})_{i=1,2,3,4}
=(\omega_1, \omega_2, \omega_3, \omega_4) (u^{\varepsilon})$ 
and $\omega^*=(\omega_i^*)_{i=1,2,3,4}=(\omega_1, \omega_2, \omega_3, \omega_4) (u^*)$ 
be the solutions of \eqref{E3.1}--\eqref{E3.3}, where $u^{\varepsilon}=u^*+\varepsilon u \in U_{ad}$, $\forall u \in U_{ad}$. 
We subtract the system associated to $\omega^*$ from the one corresponding to $\omega^{\varepsilon}$, 
where $\omega_{i}^{\varepsilon}=\omega_{i}^{*}+\varepsilon y_{i}^{\varepsilon}$, and get
\begin{equation}\label{E6.1}
\begin{cases}
   { }^{\CFC} \Dat y^{\varepsilon} =  \lambda\Delta y^{\varepsilon} + \frac{\Phi(\omega^{\varepsilon}) - \Phi(\omega^*)}{\varepsilon}, 
   &\text { in } \Omega_T \\ 
   \nabla y_{i}^\varepsilon\cdot \vec{n} = 0, \ i=1,2,3,4, &\text { on } \Sigma_T,\\
   y^{\varepsilon}(0, x)=0, &\text { in } \Omega.
\end{cases}
\end{equation}
On one side, a straightforward computation yields
\begin{equation*}
    \frac{\Phi(\omega^{\varepsilon}) - \Phi(\omega^*)}{\varepsilon} 
    = \cN_{\varepsilon} y^{\varepsilon} + Fu,
\end{equation*}
with 
\begin{equation*}
     \cN_{\varepsilon} = \cN(\omega_3^{\varepsilon}, u^{\varepsilon}) 
= \begin{pmatrix}
   \beta - \mu\omega_3^{\varepsilon}- \xi -u^{\varepsilon} & \beta & \beta - \mu\omega_1^* & \beta \\
\mu \omega_3^{\varepsilon} & - \xi - \mu & \mu\omega_1^* & 0\\
0 & \kappa & -\xi - \eta & 0\\
u^{\varepsilon} & 0 & \eta & -\xi
\end{pmatrix}
\ \text{ and } \
F=\begin{pmatrix}
-\omega_1^* \\0 \\0\\ \omega_1^*
\end{pmatrix}.
\end{equation*}
On the other hand, the elements of the matrix $\cN_{\varepsilon}$ are also uniformly bounded with respect to $\varepsilon$ and by Lemma~\ref{L4}, for $\varepsilon\to0$ in \eqref{E6.1}, we get
\begin{equation}\label{E6.2}
\begin{cases}
    { }^{\CFC} \Dat y = \lambda \Delta y + \cN y + Fu, &\text { in } \Omega_T \\ 
   \nabla y_i\cdot \vec{n} = 0, \quad i=1,2,3,4, &\text { on } \Sigma_T,\\
y(0,x) = 0, &\text { in } \Omega,
\end{cases}
\end{equation}
where $\cN = \cN(\omega_3^*, u^*)$. 
Using the same methodology as in 
the proof of Theorem~\ref{T1},
we can determine that the problem~\eqref{E6.2} has one and only one solution.
To determine the adjoint problem associated to $y$, we introduce $\rho =(\rho_1, \rho_2, \rho_3, \rho_4)$ in such a way that
\begin{equation*}
   \int_0^T \int_{\Omega} \bigl({ }^{\CFC} \Dat y - \lambda \Delta y\bigr) \rho\, dx dt 
   = \int_0^T \int_{\Omega} \bigl(\cN y + Fu\bigr) \rho \,dx dt.
\end{equation*}
Since
\begin{equation*}
   \int_{\Omega}(\Delta y) \rho \,dx 
   = -\int_{\Omega} \nabla y \cdot \nabla \rho\, dx 
   =  \int_{\Omega} y (\Delta \rho) \,dx,
\end{equation*}
and by Lemma~\ref{L4}, we have
\begin{equation*}
    \int_0^T ({ }^{\CFC} \Dat y) \rho \,dt 
    = \int_0^T y  ({ }^{\CFC}_{T} \Dat \rho) \,dt 
    + \frac{1}{1-\alpha} \,\rho(T, x) \int_0^T y(t) \,e^{-\gamma (T-t)} \,dt.
\end{equation*}
Then the corresponding dual system for the system~\eqref{E3.1}--\eqref{E3.3} can be expressed as
\begin{equation}\label{E6.3}
\begin{cases}
   { }^{\CFC}_{T} \Dat \rho - \lambda \Delta \rho - \cN\rho  
   = \cW^* \cW \omega^*, &\text { in } \Omega_T \\ 
   \nabla \rho_{i}\cdot \vec{n} = 0, \quad i=1,2,3,4, &\text { on } \Sigma_T,\\
    \rho(T,x) = \cW^* \cW \omega^*(T,x), &\text { in } \Omega,
\end{cases}
\end{equation}
where $\cW$ is the matrix defined by
\begin{equation*}
\cW = \begin{pmatrix}
0 & 0 & 0 & 0\\
0 & 0 & 0 & 0\\
0 & 0 & 1 & 0 \\
0 & 0 & 0 & 0
\end{pmatrix}.
\end{equation*}
Using the same methodology as in Theorem~\ref{T1}, we can show that problem~\eqref{E6.3} has a unique solution.

\medskip
The following theorem gives us the necessary conditions for the optimal control $u^*$.
\begin{theorem}\label{T4}
Let $\omega^* = \omega(u^*)$ be an optimal solution of \eqref{E3.1}--\eqref{E3.5}. Then
\begin{equation}\label{E6.4}
   u^* =\max \biggl\{\min \Bigl(-\frac{1}{\sigma} F^* \rho, 1\Bigr), 0\biggr\} = \max \biggl\{\min \Bigl(1, -\frac{\omega_1^*}{\sigma}(\rho_1-\rho_4)\Bigr), 0\biggr\}.
\end{equation}
where $\rho$ is a solution of \eqref{E6.3}.
\end{theorem}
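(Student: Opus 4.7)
The strategy is the classical variational approach for optimal control. Since $u^* \in U_{ad}$ minimizes $\cJ$, I would consider an admissible perturbation $u^\varepsilon = u^* + \varepsilon u$ and study the sign of the first variation of $\cJ$ at $u^*$. Writing $\omega^\varepsilon = \omega^* + \varepsilon y^\varepsilon$, dividing the increment $\cJ(\omega^\varepsilon, u^\varepsilon) - \cJ(\omega^*, u^*) \ge 0$ by $\varepsilon > 0$, and letting $\varepsilon \to 0^+$ (using the convergence $y^\varepsilon \to y$ where $y$ solves the sensitivity system~\eqref{E6.2}), I would obtain the variational inequality
\begin{equation*}
\int_\Omega \omega_3^*(T) y_3(T)\,dx + \int_0^T\int_\Omega \omega_3^* y_3 \,dx dt + \sigma \int_0^T\int_\Omega u^* u \,dx dt \ge 0
\end{equation*}
for every admissible direction $u$.

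Next I would trade $y$ for the adjoint state $\rho$ defined by~\eqref{E6.3}. Observing that $\cW^*\cW\omega^* = (0,0,\omega_3^*,0)^\top$, the bulk integral above equals $\langle \cW^*\cW\omega^*, y\rangle_{L(\Omega_T)}$. I would test the adjoint equation against $y$, move the backward fractional derivative ${}^{\CFC}_{T}\Dat$ onto $y$ via Lemma~\ref{L4} (with $y(0,x)=0$ killing the initial boundary term), apply Green's identity with the no-flux boundary condition to transfer $\Delta$ from $\rho$ to $y$, and substitute the forward equation from~\eqref{E6.2}. The terminal transversality $\rho(T,x) = \cW^*\cW\omega^*(T,x)$ is calibrated precisely to absorb the $\omega_3^*(T)y_3(T)$ contribution from the endpoint cost. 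After cancellation, what remains is
\begin{equation*}
\int_0^T\int_\Omega \bigl(F^*\rho + \sigma u^*\bigr)\, u \,dx dt \ge 0 \quad \text{for all admissible } u.
\end{equation*}

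The final step is a pointwise projection onto the closed convex set $U_{ad} = \{w : 0 \le w \le 1\}$. Splitting into the cases $u^*(t,x) \in (0,1)$, $u^*(t,x) = 0$, and $u^*(t,x) = 1$, and choosing signed admissible directions $u$ supported on each subregion, the variational inequality forces the Pontryagin-type characterization
\begin{equation*}
u^*(t,x) = \max\biggl\{\min\Bigl(-\frac{1}{\sigma} F^*\rho(t,x),\, 1\Bigr),\, 0\biggr\}.
\end{equation*}
A direct evaluation from $F = (-\omega_1^*,\,0,\,0,\,\omega_1^*)^\top$ yields $F^*\rho = \omega_1^*(\rho_4-\rho_1)$, producing the explicit componentwise formula stated in the theorem (modulo sign convention).

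The main technical obstacle is the non-local boundary contribution generated by Lemma~\ref{L4}, namely $\frac{1}{1-\alpha}\,\rho(T,x)\int_0^T y(t)\,e^{-\gamma(T-t)}\,dt$. In the classical integer-order case this term collapses to $\rho(T)\cdot y(T)$ and matches the terminal cost term cleanly; with the $\CFC$ kernel the matching is memory-weighted rather than pointwise in time, which is what dictates the specific form of the adjoint endpoint condition $\rho(T,x) = \cW^*\cW\omega^*(T,x)$. Handling this mismatch carefully — ensuring that the surviving boundary contribution combines with $\omega_3^*(T)y_3(T)$ when tested against $y$ — is the delicate point that distinguishes the derivation from the standard integer-order case.
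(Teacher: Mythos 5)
Your proposal follows essentially the same route as the paper's proof: first variation of $\cJ$ along $u^\varepsilon = u^* + \varepsilon u$, identification of the G\^ateaux derivative via the sensitivity system~\eqref{E6.2}, duality with the adjoint system~\eqref{E6.3} (Lemma~\ref{L4} together with Green's identity and the no-flux conditions) to arrive at $\int_0^T (F^*\rho + \sigma u^*, \nu)_{L^2(\Omega)}\,dt \ge 0$, and finally pointwise projection onto $U_{ad}$. Your closing remark about the memory-weighted boundary term produced by Lemma~\ref{L4} flags a delicate cancellation that the paper's own proof passes over silently, and your direct evaluation $F^*\rho = \omega_1^*(\rho_4-\rho_1)$ correctly exposes the sign discrepancy between the two expressions displayed in~\eqref{E6.4}.
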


\begin{proof}
Let $\omega^* = \omega(u^*)$ be a solution of \eqref{E3.1}--\eqref{E3.5}. We get
\begin{equation*}
\begin{split}
\cJ(\omega^*, u^*) 
&=\int_\Omega \bigl(\omega_3^*(T, x)\bigr)^2 \,dx 
    + \int_0^T \int_\Omega \bigl(\omega_3^*(t,x)\bigr)^2 \,dx dt 
   + \sigma \int_0^T \int_\Omega \bigl(u^*(t,x)\bigr)^2 \,dx dt\\
&= \|\omega_3^*(T, .)\|_{L^2(\Omega)}^2
   + \int_0^T \|\omega_3^*\|_{L^2(\Omega)}^2\, dt 
    + \sigma\|u^*\|_{L^2(\Omega_T)}^2 \\
&= \|\cW \omega^*(T, .)\|_{L(\Omega)}^2 
  + \int_0^T\|\cW \omega^*\|_{L(\Omega)}^2 \,dt 
   + \sigma\|u^*\|_{L^2(\Omega_T)}^2.
\end{split}
\end{equation*}
Let $\varepsilon>0$. 
Since the minimum of the objective functional is reached at $u^{*}$, then
\begin{equation*}
     \frac{\cJ(u^* + \varepsilon\nu) - \cJ(u)}{\varepsilon} \ge0,
\end{equation*}
is equivalent to
\begin{equation*}
    \int_0^T (F^* \rho + \sigma u^*,\nu)_{L^2(\Omega)} \,dt \ge0, \quad \forall \nu \in U_{ad}.
\end{equation*}
Since $L(\Omega) = \bigl(L^2(\Omega)\bigr)^4$ and $L^2(\Omega_T)$ are Hilbert spaces, then
\begin{equation*}
\begin{split}
    \cJ^{\prime}(\omega^*, u^*)(\nu)
    &= \lim _{\varepsilon\to0} \frac{1}{\varepsilon}\bigl(\cJ(\omega^{\varepsilon}, u^{\varepsilon}) - \cJ(\omega^*, u^*)\bigr) \\
    &= \lim _{\varepsilon\to0} \frac{1}{\varepsilon}\bigg(\int_0^T \int_{\Omega} \bigl((\omega_3^{\varepsilon})^2-(\omega_3^*)^2\bigr) \,dx dt 
    + \int_{\Omega} \Bigl(\bigl(\omega_3^{\varepsilon}(T, x)\bigr)^2-\bigl(\omega_3^*(T, x)\bigr)^2\Bigr) \,dx\\
    &\qquad +\sigma \int_0^T \int_{\Omega}\bigl((u^{\varepsilon})^2-(u^*)^2\bigr) \,dx dt\bigg) \\
    &= \lim_{\varepsilon\to0}\bigg(\int_0^T 
    \int_{\Omega}\Bigl(\frac{\omega_3^{\varepsilon}-\omega_3^*}{\varepsilon}\Bigr) (\omega_3^{\varepsilon}+\omega_3^*) \,dx dt 
    +\int_{\Omega}\Bigl(\frac{\omega_3^{\varepsilon}-\omega_3^*}{\varepsilon}\Bigr) (\omega_3^{\varepsilon}+\omega_3^*)(T,x) \,dx \\
     &\qquad +\sigma \int_0^T \int_{\Omega}(\varepsilon \nu^2 + 2\nu u^*) \,dx dt \bigg).
\end{split}
\end{equation*}
Because of $\omega_3^{\varepsilon} \to \omega_3^*$ in $L^2(Q)$ and $\omega_3^{\varepsilon}, \omega_3^* \in L^{\infty}(Q)$. We have
\begin{equation*}
\begin{split}
    \cJ^{\prime}(\omega^*, u^*)(\nu) 
    &= 2 \int_0^T \int_{\Omega} (\omega_3^*) \omega^{\prime} (u^*) \nu \, dx dt\\
&\qquad + 2 \int_{\Omega}\bigl((\omega_3^*) \omega^{\prime}(u^*) \nu\bigr)(T,x) \,dx 
    + 2 \sigma \int_0^T \int_{\Omega} \nu u^* \,dx dt.
\end{split}
\end{equation*}
This is the same as
\begin{equation*}
\begin{split}
   J^{\prime}(\omega^*, u^*)(\nu) 
   &= 2 \int_0^T \langle \cW \omega^*, \cW y \rangle \,dt \\
   &\qquad  + 2\langle \cW \omega^*(T,x), \cW y(T,x)\rangle  + 2 \sigma \int_0^T ( u^*, \nu)_{L^2(\Omega)} \,dt,
\end{split}
\end{equation*}
where $y = \omega^{\prime}(u^*) \nu$ is the unique solution of \eqref{E6.2}. Using \eqref{E6.2} and \eqref{E6.3}, we obtain 
\begin{equation*}
\begin{split}
    \int_0^T \langle \cW \omega^*, \cW y \rangle\,dt &
    + \langle \cW \omega^*(T, x), \cW y(T,x)\rangle \\
    &=\int_0^T \langle \cW^* \cW \omega^*, y\rangle \,dt 
    + \langle \cW \omega^*(T, x), \cW y(T, x)\rangle \\
   &= \int_0^T \langle { }^{\CFC}_{T} \Dat \rho - \lambda \Delta\rho - \cN\rho \ , \ y\rangle \,dt 
   + \langle \cW \omega^*(T, x), \cW y(T, x)\rangle \\
   &= \int_0^T\langle \rho \ , \ { }^{\CFC} \Dat y - \lambda \Delta y - \cN y\rangle \,dt\\
   &= \int_0^T \langle \rho, F\nu\rangle \,dt 
   = \int_0^T (F^* \rho, \nu)_{L^2(\Omega)} \,dt.
\end{split}
\end{equation*}
Finally, we have
\begin{equation*}
    \cJ^{\prime}(\omega^*, u^*)(\nu) 
    = 2\int_0^T  (F^* \rho + \sigma u^*, \nu)_{L^2(\Omega)} \,dt.
\end{equation*}
Since $U_{ad}$ is convex, 
we have $\cJ^{\prime}(\omega^*, u^*)(h-u^*)\ge0$, $\forall h\in U_{ad}$.  Therefore, we can say that 
\begin{equation*}
     u^{*} =\max \biggl\{\min \Bigl(-\frac{1}{\sigma} F^* \rho, 1\Bigr), 0\biggr\}.
\end{equation*}
\end{proof}

\section{Numerical results}\label{S7}
Here we give numerical approximations of the proposed fractional epidemiological model~\eqref{E3.1}--\eqref{E3.3}. 
Therefore, in the presence or absence of a vaccination program, we study the effect of the $\alpha$-order derivation of infection over 60 days. 
In the following, we assume that the disease is born in the subdomain $\Omega_0 = cell(8, 8)$ (the center of $\Omega$), where $\Omega$ is a city for the considered citizens with an area of 16\,km $\times$ 16\,km square grid. 
We have consulted \cite{SidiAmmi2022, SidiAmmi2023} (see page 12 and page 10, respectively) for the determination of certain parameters and initial conditions, comprehensively presented in Table~\ref{Tab2}, with the condition $\beta \le\xi$ as required in the proof of Theorem~\ref{T1}.

\begin{table}[hbtp]
\centering
\setlength{\tabcolsep}{0.3cm}
\caption{Parameter values and initial conditions for the SEIR model.}\label{Tab2}
\begin{tabular}{|c|c|c|c|}
\hline Description & Symbol & Value & Unit\\
\hline\hline
Initial susceptible people & $S_0$ & 100 in $\mathcal{C}^{\Omega_0}_\Omega$ and 70 in $\Omega_0$ & people$\cdot$ km$^{-2}$\\
\hline
Initial exposed people & $E_0$ & 0 in $\mathcal{C}^{\Omega_0}_\Omega$ and 20 in $\Omega_0$ & people$\cdot$ km$^{-2}$\\
\hline
Initial infected people & $I_0$ & 0 in $\mathcal{C}^{\Omega_0}_\Omega$ and 10 in $\Omega_0$ & people$\cdot$ km$^{-2}$\\
\hline
Initial removed people & $R_0$ & 0 in $\Omega$ & people$\cdot$ km$^{-2}$\\
\hline
Birth rate & $\beta$ & $0.02$ & day$^{-1}$\\
\hline
Diffusion coefficients & ${\lambda_{i} \ }_{(i=1,2,3,4)}$ & $0.1$ & km$^2\cdot$ day$^{-1}$\\
\hline
Disease transmission rate & $\kappa$ & $0.09$ & day$^{-1}$\\
\hline
Effective contact rate & $\mu$ & $0.05$ & (people$\cdot$ day)$^{-1}$km$^2$\\
\hline
Final time & $T$ & $60$ & day\\
\hline
Natural mortality rate & $\xi$ & $0.03$ & day$^{-1}$\\
\hline
Recovery rate & $\eta$ & $0.04$ & day$^{-1}$\\
\hline
\end{tabular}
\end{table}

\subsection{Forward-Backward sweep method algorithm}\label{S7.1}
To solve the proposed fractional SEIR model~\eqref{E3.1}--\eqref{E3.5}, we have used an explicit finite difference method implemented in MatLab to approximate the left-right $\mathcal{CFC}$ fractional derivatives.
Using a forward-time approach, we solved the state model~\eqref{E3.1}, while a backward-time approach was used to solve the dual problem~\eqref{E6.3}, guided by the transversality conditions. 

The initial value of the "\texttt{while}" loop condition is set to $Err=-1$, representing the minimum relative errors of $S, E, I, R, \rho_1, \rho_2, \rho_3, \rho_4$, and $u$, where the tolerance value is $10^{-3}$.
A uniform subdivision $\{x_i = 1 + i\delta_x / i=0,\dots,N_x - 1\}$ is used, where $N_x$ is the number of steps and $\delta_x$ is the step size. 
The number of time steps is denoted by $N_t$, and the step time is denoted by $\delta_t$.
The organigram of the algorithm can be summarized in Figure~\ref{F2}.
\begin{figure}[hbtp]
\centering
\begin{tikzpicture}[node distance=2cm]
\node (St) [rectangle, rounded corners, minimum width=1cm, minimum height=0.8cm,text centered, draw=black, fill=green!40] {Start};
\node (Init) [trapezium, trapezium stretches=true, trapezium left angle=70, trapezium right angle=110, minimum width=1cm, minimum height=0.8cm, text centered, draw=black, fill=blue!30, right of=St, xshift=1cm] {Initialization};
\node (ErTe) [circle, minimum width=1cm, minimum height=0.8cm, text centered, draw=black, fill=red!30, right of=Init, xshift=1cm] {$Err<0$};
\node (A) [rectangle, minimum width=1cm, minimum height=1cm, text centered, text width=2.9cm, draw=black, fill=orange!30, right of=ErTe, xshift=1.3cm, yshift=-1cm] {Solve the system~\eqref{E3.1}--\eqref{E3.3}};
\node (B) [rectangle, minimum width=2cm, minimum height=1cm, text centered, text width=2.3cm, draw=black, fill=orange!30, right of=A, xshift=1.5cm] {Solve the problem~\eqref{E6.3}};
\node (C) [rectangle, minimum width=2.5cm, minimum height=1cm, text centered, text width=2cm, draw=black, fill=orange!30, above of=B, yshift=0.5cm] {Using~\eqref{E6.4} update $u$};
\node (D) [rectangle, minimum width=2.5cm, minimum height=1cm, text centered, text width=2.6cm, draw=black, fill=orange!30, left of=C, xshift=-2cm] {Update the condition loop};
\node (FinSol) [trapezium, trapezium stretches=true, trapezium left angle=70, trapezium right angle=110, minimum width=1cm, minimum height=0.8cm, text centered, draw=black, fill=blue!30, below of=Init] {Final Solution};
\node (End) [rectangle, rounded corners, minimum width=1cm, minimum height=0.8cm,text centered, draw=black, fill=green!30, left of=FinSol, xshift=-1cm] {End};
\draw [thick,->,>=stealth] (St) -- ++ (Init);
\draw [thick,->,>=stealth] (Init) -- ++ (ErTe);
\draw [thick,->,>=stealth] (ErTe.east) -| (A) node[midway,above]{Yes};
\draw [thick,->,>=stealth] (A) -- ++ (B);
\draw [thick,->,>=stealth] (B) -- ++ (C);
\draw [thick,->,>=stealth] (C) -- ++ (D);
\draw [thick,->,>=stealth] (D) -| (ErTe); 
\draw [thick,->,>=stealth] (ErTe) |- (FinSol) node[midway,right]{No};
\draw [thick,->,>=stealth] (FinSol) -- ++ (End);
\end{tikzpicture}
\caption{Algorithm organigram for the proposed SEIR epidemic model.}\label{F2}
\end{figure}
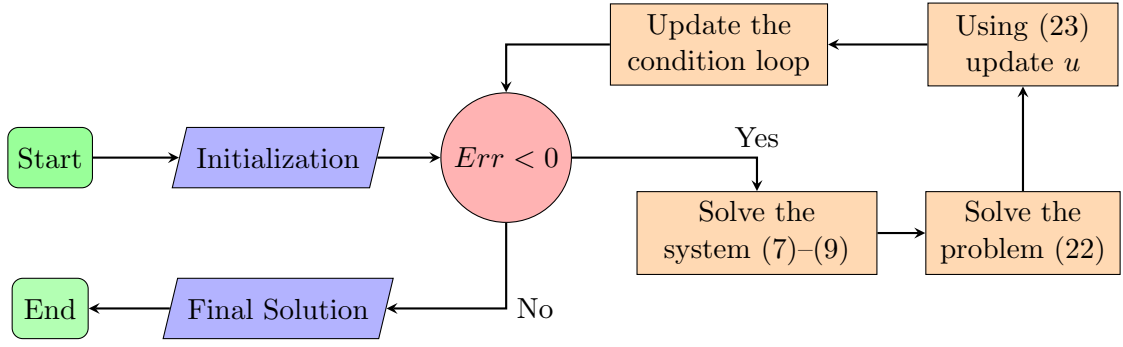

\subsection{Numerical approximations}\label{S7.2}
With different values of $\alpha$, the numerical results in the absence of vaccination are shown in Figures~\ref{F3}--\ref{F5}, and its presence in Figures~\ref{F6}--\ref{F8}. 

\subsection*{Absence of vaccination}
In the initial scenario shown in Figure~\ref{F3}, when $\alpha = 1$, the epidemic takes 60 days to reach all areas within $\Omega$. 
However, the subsequent scenarios shown in Figures~\ref{F4} and \ref{F5} show that for $\alpha = 0.9$ and $\alpha = 0.8$, the spread of the pandemic extends beyond 60 days to cover the entire region.

\begin{figure}[hbtp]
\centering
\includegraphics[scale=0.29]{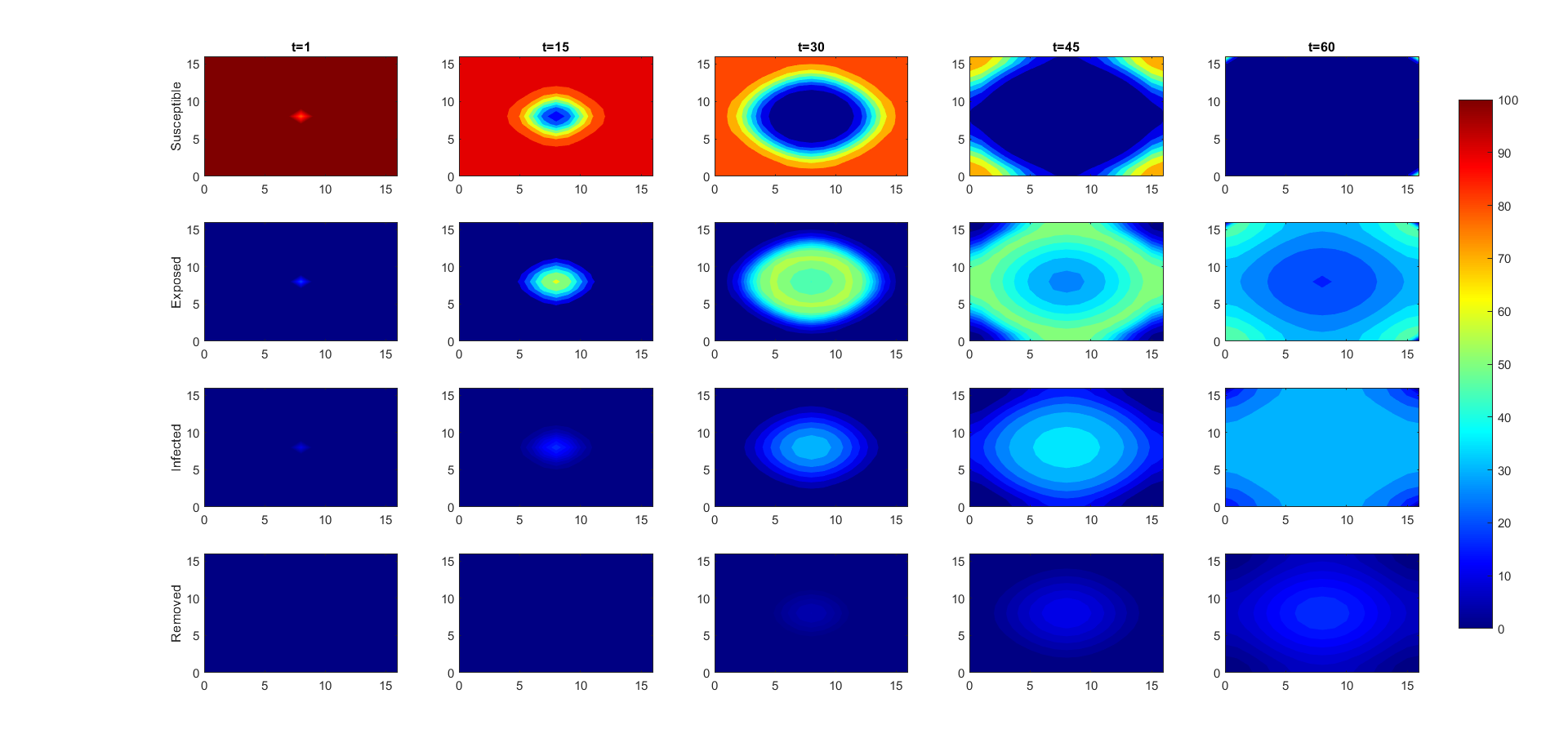}
\caption{Numerical approximations without vaccination strategy for $\alpha = 1$.}\label{F3}
\end{figure}

\begin{figure}[hbtp]
\centering
\includegraphics[scale=0.29]{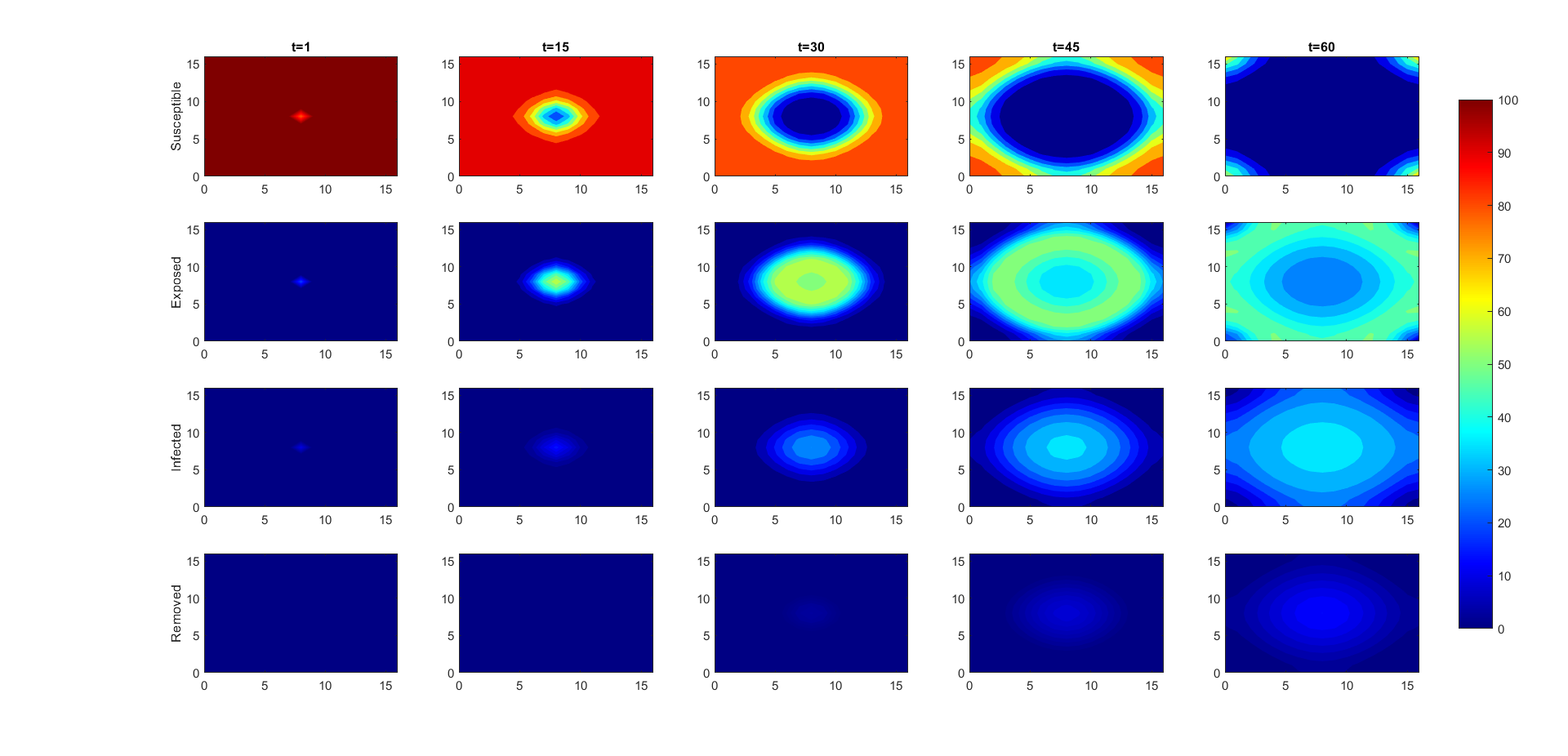}
\caption{Numerical approximations without vaccination strategy for $\alpha = 0.9$.}\label{F4}
\end{figure}

\begin{figure}[hbtp]
\centering
\includegraphics[scale=0.29]{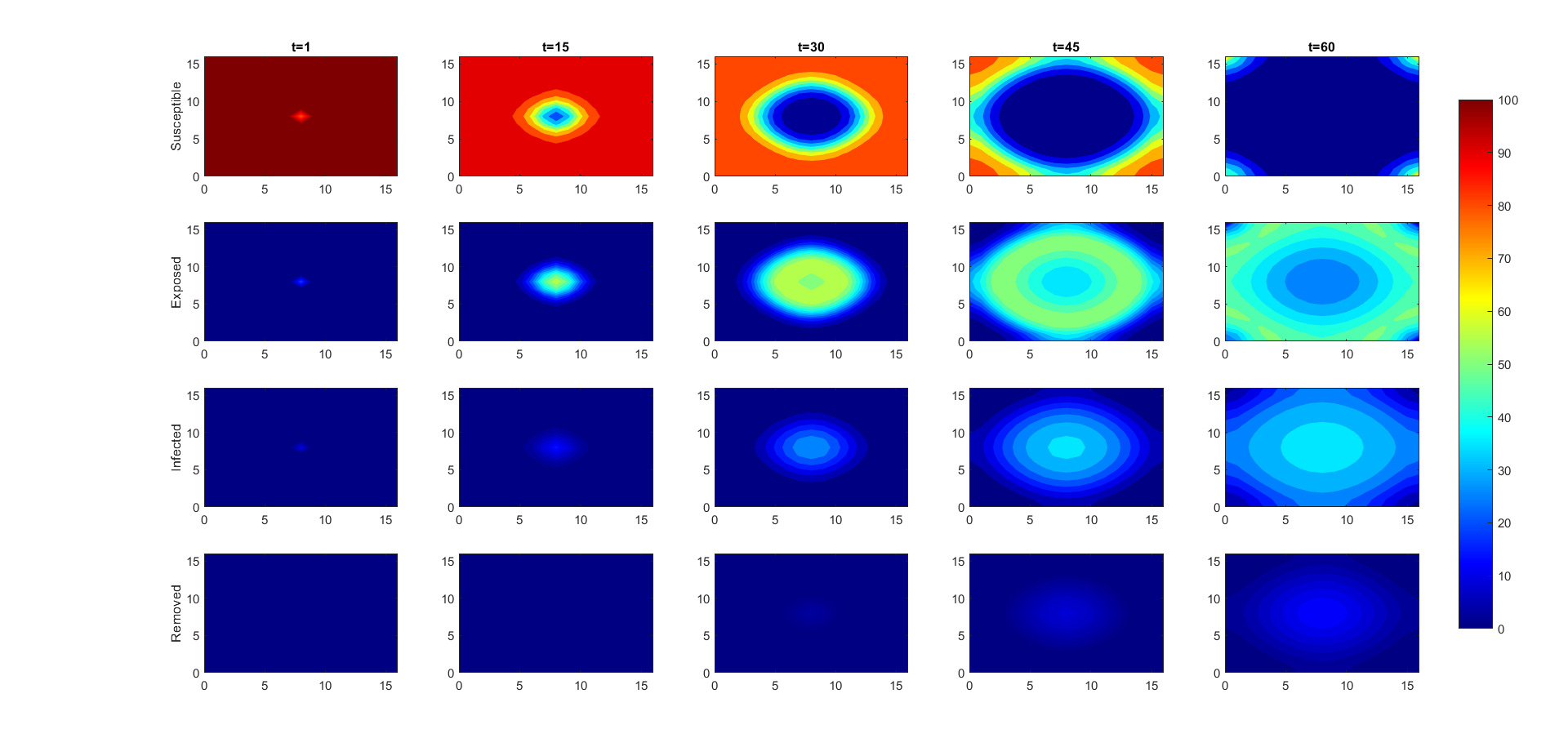}
\caption{Numerical approximations without vaccination strategy for $\alpha = 0.8$.}\label{F5}
\end{figure}

\subsection*{Presence of vaccination}
The second case shows an impressive transfer of vulnerable individuals to the recovered category.
However, the second case (Figures~\ref{F6}, \ref{F7}, and \ref{F8}) shows that the susceptible persons are transferred to the recovered category. 
This proves the effectiveness of the vaccination strategy in controlling the spread of the epidemic.

\begin{figure}[hbtp]
\centering
\includegraphics[scale=0.29]{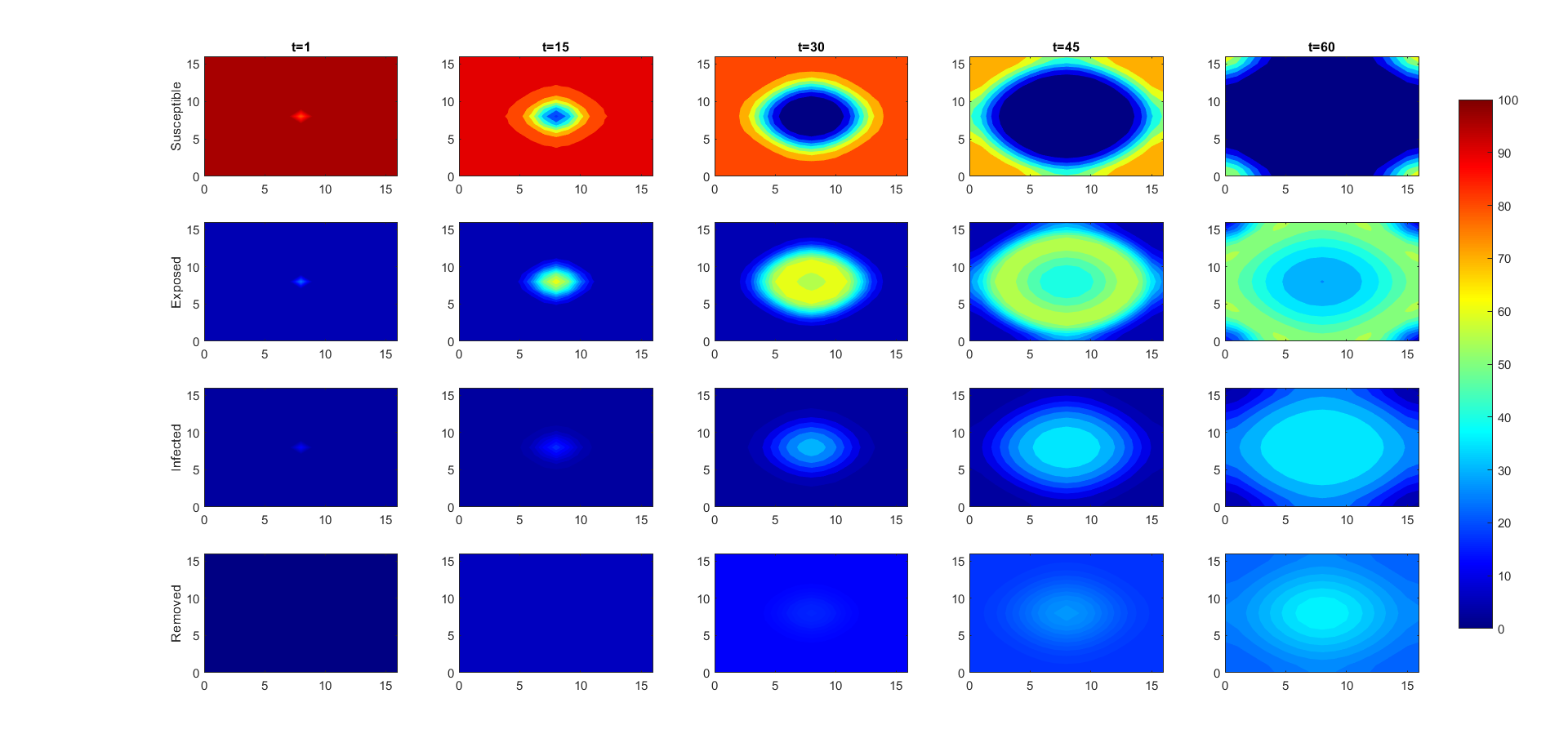}
\caption{Numerical approximations with vaccination strategy for $\alpha = 1$.}\label{F6}
\end{figure}

\begin{figure}[hbtp]
\centering
\includegraphics[scale=0.29]{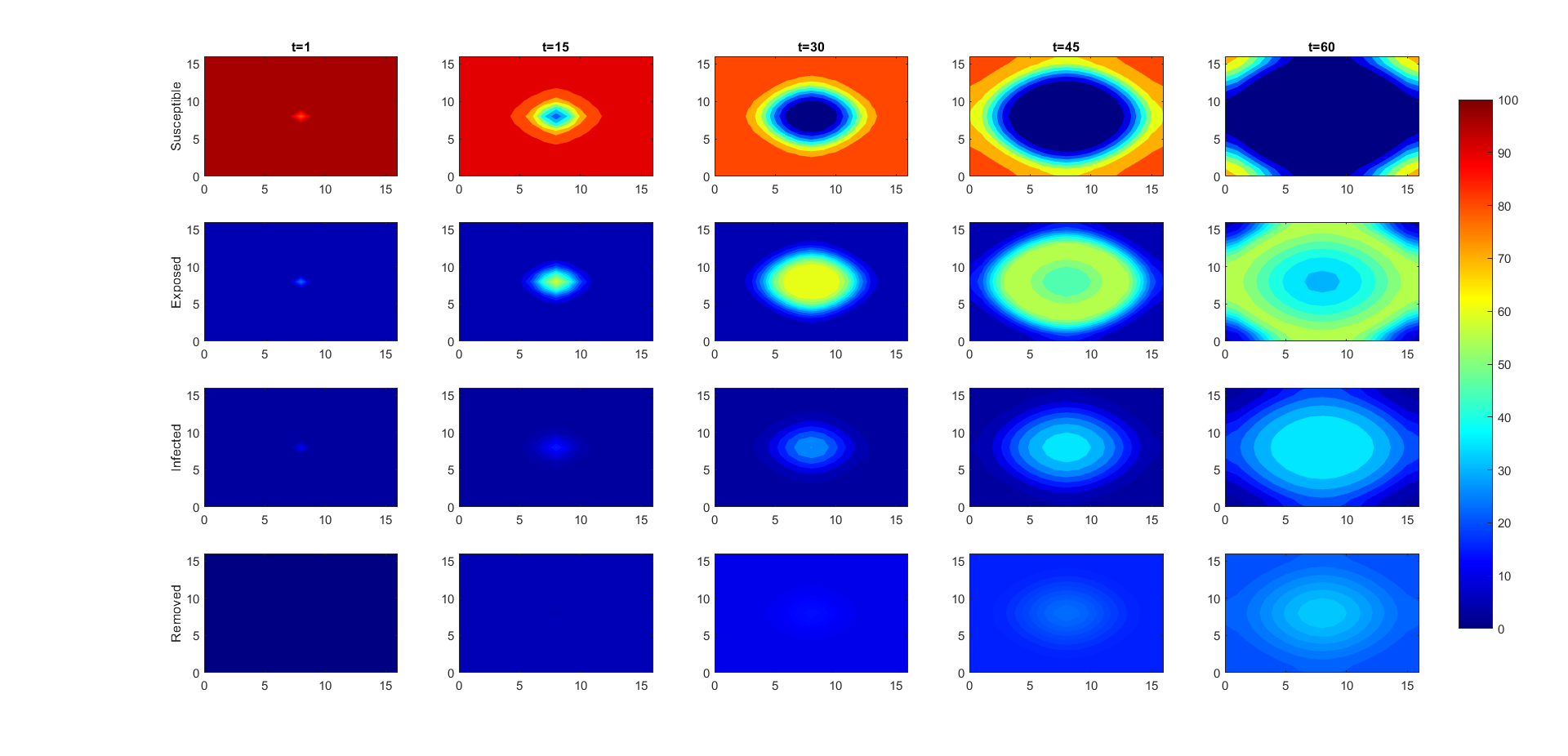}
\caption{Numerical approximations with vaccination strategy for $\alpha = 0.9$.}\label{F7}
\end{figure}

\begin{figure}[h]
\centering
\includegraphics[scale=0.29]{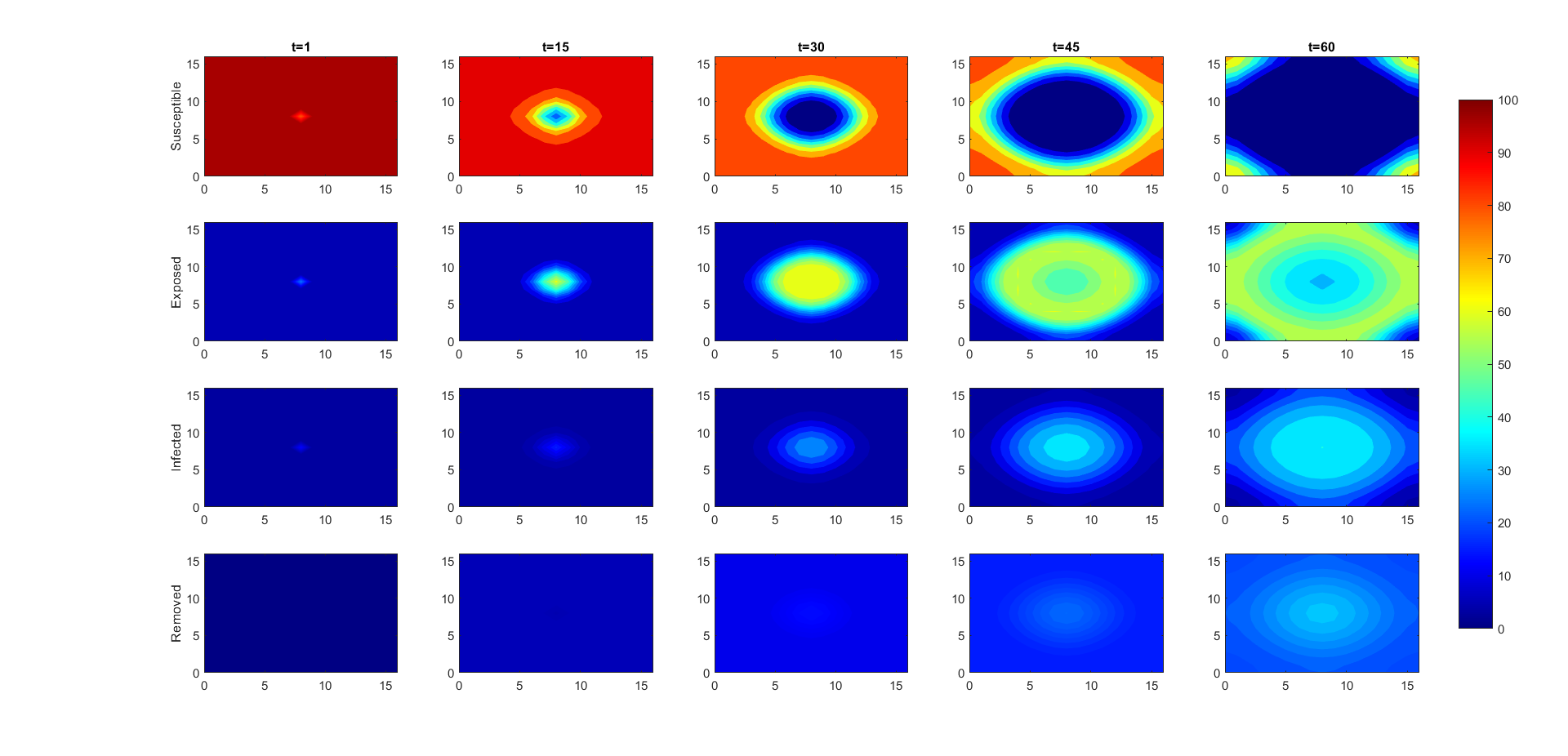}
\caption{Numerical approximations with vaccination strategy for $\alpha = 0.8$.}\label{F8}
\end{figure}

This section concludes with videos showing the spread of the epidemic over a 60-day period in the absence and presence of vaccination. 
The results are shown for different values of the parameter $\alpha$,
which is the order of the $\CFC$ fractional time derivative of the SEIR model~\eqref{E3.1}--\eqref{E3.5}.
\begin{itemize}
\item For $\alpha = 1$ (\href{https://drive.google.com/file/d/1yBiBc03bEGUk8PyOyRNKZM15t1zvJrKF/view?usp=drive_link}{Click here}).

\item For $\alpha = 0.9$ (\href{https://drive.google.com/file/d/1omTX9FsrX3atEMnK7tu29pRENnb2I1lZ/view?usp=drive_link}{Click here}).

\item For $\alpha = 0.8$ (\href{https://drive.google.com/file/d/1xfwRh1UQfjwXoe_7NgU16AfoPJx5v085/view?usp=drive_link}{Click here}).
\end{itemize}

\section{Conclusion}\label{S8}
In this article, we have introduced a novel application of optimal control theory to spatiotemporal models. 
The interactions between the four compartments, susceptible, exposed, infectious, and recovered, are modeled by a system of fractional equations using the Laplacian diffusion operator and the $\CFC$ time-fractional derivative. 
With our modest knowledge, this study can lead to more realistic models of spread in certain scenarios where we have the existence of a unique solution to our biological system as well as optimal control. 
In addition, optimal control is described through the appropriate use of state and adjoint variables. Furthermore, we have performed a comparative analysis of our system dynamics. 

The results show that when $\alpha$ does not take a fractional value, it leads to a rapid spread of the disease. 
Conversely, when $\alpha$ takes a fractional value, the disease takes more than 60 days to envelop all $\Omega$, incurring the same cost as the vaccination program in the case of natural order derivatives. 
Notably, our results indicate that the implementation of the vaccination strategy played a critical role in effectively controlling the spread of infection.

Future work will be concerned with considering more complex models than the SEIR model~\eqref{E3.1}--\eqref{E3.5}, by including more compartments and introducing e.g.\ waning effects of vaccination, time delays, and uncertainties. 
For the numerical solution, we will design nonstandard finite difference schemes (NSFDs) in the spirit of \cite{Maamar2024} that are capable of preserving the positivity of the solution, cf.\ Theorem~\ref{T2}, and other qualitative properties.


\section*{Declarations}


\subsection*{CRediT author statement} 

\textit{A. Zinihi:} Conceptualization, 
Methodology, 
Software, 
Formal analysis, 
Investigation, 
Writing -- Original Draft, 
Writing -- Review \& Editing, 
Visualization. 

\textit{M. R. Sidi Ammi:} Conceptualization, 
Methodology, 
Validation, 
Formal analysis, 
Investigation,  
Writing -- Original Draft, 
Writing -- Review \& Editing,  
Supervision.

\textit{M. Ehrhardt:} Validation, 
Formal analysis, 
Investigation, 
Writing -- Original Draft, 
Writing -- Review \& Editing, 
Project administration.




\subsection*{Data availability} 
All information analyzed or generated, which would support the results of this work are available in this article.
No data was used for the research described in the article.

\subsection*{Conflict of interest} 
The authors declare that there are no problems or conflicts 
of interest between them that may affect the study in this paper.

\bibliographystyle{acm}
\bibliography{paper}

\begin{thebibliography}{10}

\bibitem{Abbasbandy2022}
{\sc Abbasbandy, S., and Hajishafieiha, J.}
\newblock A new method to numerically solve fractional differential equations
  using a-polynomials.
\newblock {\em International Journal of Applied and Computational Mathematics
  8}, 4 (Aug. 2022).

\bibitem{Abdeljawad2017}
{\sc Abdeljawad, T.}
\newblock Fractional operators with exponential kernels and a {L}yapunov type
  inequality.
\newblock {\em Advances in Difference Equations 2017}, 1 (Oct. 2017).

\bibitem{Arshad2020}
{\sc Arshad, S., Defterli, O., and Baleanu, D.}
\newblock A second order accurate approximation for fractional derivatives with
  singular and non-singular kernel applied to a {HIV} model.
\newblock {\em Applied Mathematics and Computation 374\/} (June 2020), 125061.

\bibitem{Atanackovi2018}
{\sc Atanackovi{\'{c}}, T.~M., Pilipovi{\'{c}}, S., and Zorica, D.}
\newblock Properties of the {C}aputo-{F}abrizio fractional derivative and its
  distributional settings.
\newblock {\em Fractional Calculus and Applied Analysis 21}, 1 (Feb. 2018),
  29--44.

\bibitem{Chinviriyasit2010}
{\sc Chinviriyasit, S., and Chinviriyasit, W.}
\newblock Numerical modelling of an {SIR} epidemic model with diffusion.
\newblock {\em Applied Mathematics and Computation 216}, 2 (Mar. 2010),
  395--409.

\bibitem{Das2017}
{\sc Das, A., and Pal, M.}
\newblock {A mathematical study of an imprecise SIR epidemic model with
  treatment control}.
\newblock {\em Journal of Applied Mathematics and Computing 56}, 1–2 (Jan.
  2017), 477–500.

\bibitem{delCastilloNegrete2005}
{\sc del Castillo-Negrete, D., Carreras, B.~A., and Lynch, V.~E.}
\newblock Nondiffusive transport in {P}lasma turbulence: A fractional diffusion
  approach.
\newblock {\em Physical Review Letters 94}, 6 (Feb. 2005).

\bibitem{Diethelm2012}
{\sc Diethelm, K.}
\newblock A fractional calculus based model for the simulation of an outbreak
  of dengue fever.
\newblock {\em Nonlinear Dynamics 71}, 4 (May 2012), 613--619.

\bibitem{Forna2020}
{\sc Forna, A., Dorigatti, I., Nouvellet, P., and Donnelly, C.~A.}
\newblock Spatiotemporal variability in case fatality ratios for the
  2013{\textendash}2016 {E}bola epidemic in {W}est {A}frica.
\newblock {\em International Journal of Infectious Diseases 93\/} (Apr. 2020),
  48--55.

\bibitem{Gorenflo1999}
{\sc Gorenflo, R., Mainardi, F., and Podlubny, I.}
\newblock Fractional differential equations.
\newblock {\em Academic Press 8\/} (1999), 683--699.

\bibitem{Grigorenko2003}
{\sc Grigorenko, I., and Grigorenko, E.}
\newblock Chaotic dynamics of the fractional {L}orenz system.
\newblock {\em Physical Review Letters 91}, 3 (July 2003).

\bibitem{Guiro2023}
{\sc Guiro, A., Ouedraogo, D., and Ouedraogo, H.}
\newblock {\em Global stability for a delay {SIR} epidemic model with general
  incidence function, observers design}.
\newblock Springer International Publishing, 2023, p.~259–280.

\bibitem{Hamdan2022}
{\sc Hamdan, N.~I., and Kilicman, A.}
\newblock Mathematical modelling of dengue transmission with intervention
  strategies using fractional derivatives.
\newblock {\em Bulletin of Mathematical Biology 84}, 12 (Oct. 2022).

\bibitem{Hikal2021}
{\sc Hikal, M.~M., Atteya, T. E.~M., Hemeda, H.~M., and Zahra, W.~K.}
\newblock {SEIRS} epidemic model with {C}aputo–{F}abrizio fractional
  derivative and time delay: dynamical analysis and simulation.
\newblock {\em Ricerche di Matematica\/} (Oct. 2021).

\bibitem{hoang2024differential}
{\sc Hoang, M.~T., and Ehrhardt, M.}
\newblock Differential equation models for infectious diseases: Mathematical
  modeling, qualitative analysis, numerical methods and applications, 2024.
\newblock IMACM Preprint 24/02, February 2024.

\bibitem{Jumarie2007}
{\sc Jumarie, G.}
\newblock {Fractional partial differential equations and modified
  Riemann-Liouville derivative new methods for solution}.
\newblock {\em Journal of Applied Mathematics and Computing 24}, 1–2 (May
  2007), 31–48.

\bibitem{Kevrekidis2021}
{\sc Kevrekidis, P.~G., Cuevas-Maraver, J., Drossinos, Y., Rapti, Z., and
  Kevrekidis, G.~A.}
\newblock Reaction-diffusion spatial modeling of {COVID}-19: {G}reece and
  {A}ndalusia as case examples.
\newblock {\em Physical Review E 104}, 2 (Aug. 2021).

\bibitem{Kuniya2018}
{\sc Kuniya, T., and Wang, J.}
\newblock Global dynamics of an {SIR} epidemic model with nonlocal diffusion.
\newblock {\em Nonlinear Analysis: Real World Applications 43\/} (Oct. 2018),
  262--282.

\bibitem{Leibniz1849letter}
{\sc Leibniz, G.~W.}
\newblock Letter from {H}anover, {G}ermany to {G. F. A. L’Hospital},
  {S}eptember 30, 1695.
\newblock {\em Mathematische Schriften 2\/} (1849), 301--302.

\bibitem{Lutz2001}
{\sc Lutz, E.}
\newblock Fractional {L}angevin equation.
\newblock {\em Physical Review E 64}, 5 (Oct. 2001).

\bibitem{Maamar2024}
{\sc Maamar, M., Ehrhardt, M., and Tabharit, L.}
\newblock {A Nonstandard Finite Difference Scheme for a Time-Fractional Model
  of Zika Virus Transmission}.
\newblock {\em Mathematical Biosciences and Engineering 21}, 01 (2024),
  924--962.

\bibitem{Machado2011}
{\sc Machado, J.~T., Kiryakova, V., and Mainardi, F.}
\newblock Recent history of fractional calculus.
\newblock {\em Communications in Nonlinear Science and Numerical Simulation
  16}, 3 (Mar. 2011), 1140--1153.

\bibitem{Mokhtari2019}
{\sc Mokhtari, R., and Mostajeran, F.}
\newblock {A high order formula to approximate the Caputo fractional
  derivative}.
\newblock {\em Communications on Applied Mathematics and Computation 2}, 1
  (July 2019), 1–29.

\bibitem{Moussa2015}
{\sc Moussa, A.}
\newblock Some variants of the classical {A}ubin{\textendash}{L}ions {L}emma.
\newblock {\em Journal of Evolution Equations 16}, 1 (Aug. 2015), 65--93.

\bibitem{Oldham1974}
{\sc Oldham, K., and Spanier, J.}
\newblock {\em The fractional calculus theory and applications of
  differentiation and integration to arbitrary order}.
\newblock Elsevier, 1974.

\bibitem{SahaRay2017}
{\sc Saha~Ray, S.}
\newblock An investigation on reliable analytical and numerical methods for the
  {R}iesz fractional nonlinear {S}chr\"{o}dinger equation in quantum mechanics.
\newblock {\em Numerical Methods for Partial Differential Equations 34}, 5
  (Sept. 2017), 1598--1613.

\bibitem{SidiAmmi2022}
{\sc Sidi~Ammi, M.~R., Tahiri, M., and Torres, D. F.~M.}
\newblock Necessary optimality conditions of a reaction{-}diffusion {SIR} model
  with {ABC} fractional derivatives.
\newblock {\em Discrete {\&} Continuous Dynamical Systems - S 15}, 3 (2022),
  621--637.

\bibitem{SidiAmmi2023}
{\sc Sidi~Ammi, M.~R., Zinihi, A., Raezah, A.~A., and Sabbar, Y.}
\newblock Optimal control of a spatiotemporal {SIR} model with
  reaction{\textendash}diffusion involving $p$-{L}aplacian operator.
\newblock {\em Results in Physics 52\/} (Sept. 2023), 106895.

\bibitem{Stiassnie1979}
{\sc Stiassnie, M.}
\newblock On the application of fractional calculus for the formulation of
  viscoelastic models.
\newblock {\em Applied Mathematical Modelling 3}, 4 (Aug. 1979), 300--302.

\bibitem{ToledoHernandez2014}
{\sc Toledo-Hernandez, R., Rico-Ramirez, V., Iglesias-Silva, G.~A., and
  Diwekar, U.~M.}
\newblock A fractional calculus approach to the dynamic optimization of
  biological reactive systems. {P}art {I}: {F}ractional models for biological
  reactions.
\newblock {\em Chemical Engineering Science 117\/} (Sept. 2014), 217--228.

\bibitem{Zinihi2024}
{\sc Zinihi, A., Sidi~Ammi, M.~R., and Ehrhardt, M.}
\newblock {Mathematical modeling and Hyers-Ulam stability for a nonlinear
  epidemiological model with $\Phi_p$ operator and Mittag-Leffler kernel},
  2024.
\newblock arXiv:2402.14487 [nlin.CD].

\end{thebibliography}


\end{document}